\theoremstyle{plain}
\newtheorem{theorem}{Theorem}[section]
\newtheorem{proposition}[theorem]{Proposition}
\newtheorem{lemma}[theorem]{Lemma}
\newtheorem{corollary}[theorem]{Corollary}
\theoremstyle{definition}
\newtheorem{remark}[theorem]{Remark}
\newtheorem{example}[theorem]{Example}
\newcommand{\norm}[1]{\left\lVert#1\right\rVert}
\definecolor{darkpastelgreen}{rgb}{0.01, 0.75, 0.24}
\newcommand{\mathset}[2]{\left\{#1\middle\vert #2 \right\}}
\newcommand{\mathseq}[2]{\left(#1\right)_{#2}}
\newcommand{\Exp}[1]{\mathbb{E}\left(#1\right)}
\newcommand{\ExpCon}[2]{\mathbb{E}\left(#1\middle\vert #2\right)}
\newcommand{\seq}[2]{\left(#1\right)_{#2}}
\newcommand{\Prob}[1]{\mathbb{P}\left(#1\right)}
\newcommand{\ProbCon}[2]{\mathbb{P}\left(#1\middle\vert #2\right)}
\newcommand{\card}[1]{{\# #1}}
\newcommand{\ind}{{\mathds{1}}}
\newcommand{\N}{\mathbb{N}}
\newcommand{\Z}{\mathbb{Z}}
\newcommand{\R}{\mathbb{R}}
\newcommand{\leb}{\mathbb{\lambda}}
\newcommand*\diff{\mathop{}\!\mathrm{d}}
\DeclareMathOperator{\var}{Var}
\DeclareMathOperator{\uniform}{Unif}
\DeclareMathOperator{\BOX}{Box}
\title{Poissonian pair correlations for dependent random variables}
\date{\today}
\author{Jasmin Fiedler \and Michael Gnewuch \and Christian Weiß}
\address{Jasmin Fiedler, {\bf{Ruhr West University of Applied Sciences}}\\ {{Department of Natural Sciences, Duisburger Str. 100}}\\{{45479 M\"ulheim an der Ruhr, Germany}}, jasmin.fiedler@hs-ruhrwest.de\newline
Michael Gnewuch, {\bf{University of Osnabr\"uck}}\\ {{Institute of mathematics, Albrechtstr. 28a}}\\{{49076 Osnabr\"uck , Germany}}, michael.gnewuch@uni-osnabrueck.de\newline
Christian Wei\ss{}, {\bf{Ruhr West University of Applied Sciences}}\\ {{Department of Natural Sciences, Duisburger Str. 100}}\\{{45479 M\"ulheim an der Ruhr, Germany}}, christian.weiss@hs-ruhrwest.de
}
\email{}
\begin{document}

\begin{abstract} We consider Poissonian pair correlations (PPC) for uniformly distributed sequences of random numbers with a dependency structure. More specifically, we treat two classes of dependent random variables which have widely been studied in the literature, namely sequences of jittered samples and random walks on the torus. We show that for the former class, the PPC property depends on how the finite sample is extended to an infinite sequence. Moreover, we prove that, under some mild assumptions, the random walk on the torus generically has PPC.
\end{abstract}

\maketitle

\section{Introduction and main results}\label{sec:intro}

A random variable $X$ on the unit interval $[0,1)$ is said to be uniformly distributed, $X\sim\uniform[0,1)$, if its probability measure is the Lebesgue measure $\lambda$ restricted to $[0,1)$. It is a generic property of sequences $\mathseq{X_n}{n\in N}$ of independent, uniformly distributed random variables that
\[
\lim_{N\to\infty}\frac{1}{N}\card{\mathset{1\leq i\leq N}{X_i\in[a,b)}}=\leb([a,b))=b-a
\] for all $0\leq a<b\leq 1$. 
This property serves as the definition of uniform distribution for arbitrary (random or deterministic) sequences $x=\mathseq{x_n}{n\in \N} \subset [0,1]$. To avoid confusion we will be calling a random variable uniformly distributed and a deterministic sequence equidistributed. However the definition does not give any information on how quickly the limit approaches the Lebesgue measure. To quantify the speed of convergence, the notion of discrepancy was introduced and is nowadays a corner stone in equidistribution theory. It is defined by
    \[
    D_N(x):=\sup_{0\leq a<b\leq 1}\left\vert \frac{1}{N}\card{\mathset{1\leq i\leq N}{x_i\in[a,b)}}-\leb([a,b)) \right\vert.
    \]
A seminal result in \cite{Sch72} states that for any sequence $x$ the inequality
\[
D_N(x)\geq c \frac{\ln(N)}{N},
\]
holds for infinitely many $N\in\N$ and a universal constant $c>0$. Sequences with the optimal behavior $D_N(x)=\mathcal{O}\left(\frac{\ln(N)}{N},\right)$ are called low-discrepancy sequences, see also \cite{KN74}.\\[12pt]
While the discrepancy measures the uniformity of sequences on a global scale, the behavior and combinatorics of its gaps, i.e. the distances between its geometrically neighboring elements, is typically considered on a local scale: Given $0<\alpha\leq 1$, $N\in\N$ and $s\geq0$ we define
\begin{equation}\label{eq:def_R(s,N)}
 R_\alpha(s,N):=\frac{1}{N^{2-\alpha}} \card{\left\{ 1 \leq i \neq j \leq N : \norm{x_i-x_j} \leq \frac{s}{N^\alpha} \right\}},  
\end{equation}
where $\norm{\cdot}$ denotes the distance of a number to its closest integer. We say that $x=\mathseq{x_n}{n\in\N}$ has $\alpha$-Poissonian pair correlations ($\alpha$-PPC) if
\begin{equation}
    \lim_{N\to\infty} R_\alpha(s,N)=\leb([-s,s])=2s
\end{equation}
for all $s\geq 0$. If $\alpha=1$, we simply say that the sequence has Poissonian pair correlations (PPC), a notion popularized in \cite{rudnick:pair_corr_polynom}. For brevity, we will also write $R(s,N):= R_1(s,N)$. Possessing Poissonian pair correlations is another generic property of independent and uniformly distributed random sequences, see e.g. \cite{Mar07}. Besides these probabilistic, i.e. not explicit, ones only relatively few examples of sequences with Poissonian pair correlations have been found so far, see e.g. \cite{elbaz:sqrt_n_poiss,radz:alphantheta_poiss,lutsko:alphantheta_poiss}. Moreover, the case of mixtures of deterministic and stochastic sequences has been treated in the literature, see \cite{lachman:additive_energy_discrepancy_poissonian,SCHMIEDT2024422}. Nonetheless, it is known that any sequence of numbers in $[0,1)$ which has PPC is equidistributed according to \cite[Theorem 1]{aistleitner:pair_corr_equi}.\\[12pt] 
If $\alpha<1$, then the property is also called weak Poissonian pair correlations, see \cite{NP07}. This wording can be justified by the fact that, for $0<\alpha_1<\alpha_2\leq 1$ it is known that $\alpha_2$-Poissonian pair correlations imply $\alpha_1$-Poissonian pair correlations, see for example \cite[Theorem~4]{hauke:weak_poiss_corr}. It is interesting to note, that many classical low-discrepancy sequences such as van-der-Corput sequences and Kronecker sequences (see e.g. \cite{KN74}) do not have Poissonian pair correlations.\footnote{For these specific examples this is due to their gap structure, see \cite[Proposition 1]{larcher:som_neg_results_poiss_pair_corr}. Nonetheless, it is not known if low-discrepancy and PPC are mutually exclusive.} However, the situation is different for weak Poissonian pair correlations. In \cite{weiss:some_conn_discr_fin_gap_pair_corr} it was shown that any sequence $x=\mathseq{x_n}{n\in\N}$ in $[0,1)$ with $D_N(x)=o(N^{-(1-\varepsilon)})$ for $0<\varepsilon<1$ has $\alpha$-PPC as long as $0 < \alpha\leq 1 - \varepsilon$.
Therefore, low-discrepancy sequences have $\alpha$-PPC for all $\alpha<1$.\\[12pt]
While Poissonian pair correlations are a generic property of independent, uniformly distributed sequences of random variables one might ask what happens if one of these conditions is dropped. The case of independent, non uniformly distributed sequences was essentially solved in \cite{aistleitner:pair_corr_equi}. However to the best of the authors' knowledge, only limited research has been conducted for dependent random variables and it concentrated on the setting of certain subsequences of Kronecker sequences, see most importantly \cite{BTW01}. The present paper therefore aims to contribute to this question by looking at two typical classes of sequences of \textit{dependent} random variables, namely random walks on the torus and jittered sampling. We can give complete answers for the pair correlation statistic in these cases.\\[12pt]
First, we consider a random walk on $[0,1)$ or more precisely on the torus $\mathbb{T}_1=[0,1]\, /\sim$, where the ends of the unit interval are glued together. In more detail: Let $\mathseq{Y_n}{n\in\N}$ be independent and identically distributed (i.i.d.) random variables on $[0,1)$ with distribution $D$. We now define $\mathseq{X_n}{n\in\N}$ recursively by setting $X_1=x_1\in[0,1)$ and $X_{N+1}=\{X_N+Y_N\}$, where $\{ \cdot \}$ denotes the fractional part of a number. This is a one-dimensional random walk on the torus starting at $x_1$ whose step size has distribution $D$. As long as $D$ has an $L^p$-Lebesgue density for some $p>1$, the sequence $\mathseq{X_n}{n\in\N}$ possess Poissonian pair correlations.
\begin{theorem}\label{thm:rand_walk_ppc}
    Let $X=\mathseq{X_n}{n\in\N}$ be a random walk on the torus starting in ${x_1\in[0,1)}$ with distribution $D$. 
    Assume that $D$ has a Lebesgue density in $L^p([0,1))$ for some $p\in [1,\infty]$. 
    \begin{itemize}
        \item[(i)]
        If $p>1$, then $X$ almost surely has PPC.
        \item[(ii)]
        If $p=1$, then we still have
        $\lim_{N\to \infty} \Exp{R(s,N)} = 2s$ for all $s \geq 0$.
    \end{itemize} 
\end{theorem}
Indeed, our proof relies on the fact that the distribution $D$ has a Lebesgue density because it builds upon earlier results from \cite{schatte:asym_distr_sum} on random walks on the unit torus, cf. Theorem~\ref{thm:schatte_approx_unif}, 
and some additional probability estimates, cf. Lemma~\ref{lem:pre_randomsum}. This requirement cannot be dropped in general and it is easy to construct counterexamples.
\begin{example}
 If $x_1=0$ and every $Y_n$ satisfies $\Prob{Y_n=0}=\Prob{Y_n=\frac12}=\frac12$, then $X_n$ only takes the values $0$ and $\frac12$, which means that 
 the resulting random walk takes only values in $\{0, 1/2$\} 
 and therefore cannot have $\alpha$-PPC for any $\alpha\in(0,1]$.\\
 However, if $\Prob{Y_n=c}=1$ for some badly approximable $c \in \mathbb{R}$, then the sequence is a Kronecker sequence which has $\alpha$-PPC for every $\alpha<1$ but not for $\alpha =1$, see e.g. \cite{weiss:some_conn_discr_fin_gap_pair_corr}.
\end{example}
The second class of dependent sequences which we consider here is based on jittered sampling which we shortly introduce next. Jittered sampling in two dimension is somewhat reminiscent of the example in \cite{BS84}, where a (dependent) two-dimensional sampling scheme was constructed whose first two moments are equal to those of a Poissonian process. In our exposition we restrict ourselves to the one-dimensional case: given $N\in\N$, let $Y_1,\ldots,Y_N\sim\uniform[0,1)$, choose a random permutation $\pi:\{1,\ldots,N\}\to\{1,\ldots,N\}$ and define
\[
X_n=\frac{\pi(k)-1+Y_n}{N}
\]
for $1\leq n\leq N$. Then the (finite) sequence $X_1,\ldots,X_N$ forms the jittered sample. Note that (one-dimensional) jittered sampling is equivalent to the one-dimensional versions of two other sampling schemes, namely Latin hypercube sampling and stratified sampling, see e.g. \cite{mckay:comp_method_select_var} and \cite{quenouille:plane_sampling}.
All three sampling methods are  examples of negatively dependent sampling schemes, cf. \cite{DG21, GH21, WGH20}. \\[12pt]
As the notion of jittered sampling is restricted to finite samples, it needs to be extended to infinite sequences. We propose two different approaches to achieve this.
The first one is $M$-batch jittered sampling. We say that $\mathseq{X_{n}}{n\in\N}$ is an $M$-batch jittered sample sequence if for each $j \in \mathbb{N}_0$ the tuple (or \textit{batch}) $X^{(j)}:=\left(X_{jM+1},\ldots,X_{jM+M} \right)$ is itself a jittered sample and the random batches $X^{(1)},X^{(2)},$ $X^{(3)},\ldots$ are independent. Put differently, $(X_n)_{n \in \mathbb{N}}$ consists of successive segments of length $M$ that are independent copies of $X^{(0)}=(X_1,\ldots,X_M)$, which is itself a jittered sample.
\begin{theorem}\label{thm:batch_js_poiss_pair_corr}
    An $M$-batch jittered sampling sequence $\mathseq{X_{n}}{n\in\N}$ almost surely has PPC.
\end{theorem}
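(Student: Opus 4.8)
The plan is to establish the almost sure convergence $R(s,N)\to 2s$ first along the sparse subsequence $N=K^2M$, $K\in\N$, for each fixed $s>0$, and then to remove both restrictions by soft monotonicity arguments. Writing $C(s,N):=\card{\left\{1\le i\ne j\le N : \norm{x_i-x_j}\le s/N\right\}}=N\,R(s,N)$, one has the trivial inclusion $C(s_1,N_1)\le C(s_2,N_2)$ whenever $N_1\le N_2$ and $s_1/N_1\le s_2/N_2$. Applied with $K^2M\le N<(K+1)^2M$ this yields
\[
\frac{K^2}{(K+1)^2}\,R\!\left(\frac{sK^2}{(K+1)^2},\,K^2M\right)\ \le\ R(s,N)\ \le\ \frac{(K+1)^2}{K^2}\,R\!\left(\frac{s(K+1)^2}{K^2},\,(K+1)^2M\right);
\]
since the prefactors tend to $1$ and the shifted parameters tend to $s$, an $\varepsilon$–argument reduces the general statement to the subsequential one, and the latter is needed only for $s$ in a fixed countable dense subset of $(0,\infty)$, because $s\mapsto R(s,K^2M)$ is nondecreasing and $s\mapsto 2s$ is continuous. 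So I would fix $s>0$ and prove $R(s,K^2M)\to 2s$ almost surely.

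Put $L:=K^2$, $N=LM$, $t:=s/N$, and let $\xi_k=(X^{(k)}_1,\dots,X^{(k)}_M)$, $k=1,\dots,L$, denote the $L$ independent blocks, each a jittered sample of $M$ points on the torus. I would split
\[
N\,R(s,N)=W_L+B_L,
\]
where $W_L:=\sum_{k=1}^{L}\card{\left\{i\ne j:\norm{X^{(k)}_i-X^{(k)}_j}\le t\right\}}$, $B_L:=\sum_{k\ne k'}h_t(\xi_k,\xi_{k'})$, and $h_t(\xi,\eta):=\card{\left\{(i,j):\norm{\xi_i-\eta_j}\le t\right\}}$. The within–block part is negligible: once $L>s$ we have $t<1/M$, so two distinct points of one block can be $t$–close only if they lie in neighbouring cells of the partition of the torus into $M$ equal cells — there are $O(M)$ ordered such cell pairs per block, and conditionally on the permutation the two points are uniform in their cells, hence $t$–close with probability $O((tM)^2)=O((s/L)^2)$. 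This gives $\Exp{W_L}=O(Ms^2/L)$, so $N^{-1}\Exp{W_L}=O(s^2/L^2)$ is summable in $K$, and $N^{-1}W_L\to 0$ almost surely by Markov's inequality and Borel–Cantelli.

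The between–block part carries the main term, and this is where the argument really happens. The kernel $h_t$ is symmetric in its two arguments, so $B_L=2\sum_{k<k'}h_t(\xi_k,\xi_{k'})$ is (twice) a $U$–statistic of order two in the i.i.d.\ blocks $\xi_1,\dots,\xi_L$; since the blocks are fixed, Hoeffding's exact variance decomposition applies for each $L$ even though the kernel depends on $L$ through $t$. The key observation is that the first–order Hoeffding projection is \emph{constant}: each coordinate $X^{(k)}_j=(\pi_k(j)-1+Y^{(k)}_j)/M$ is marginally $\uniform[0,1)$, so independence of distinct blocks gives $\ExpCon{h_t(\xi,\eta)}{\xi}=\sum_{i,j}\Prob{\norm{\xi_i-\eta_j}\le t}=2M^2t$, with no dependence on $\xi$. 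Hence the $\zeta_1$–term vanishes and $\var(B_L)=2L(L-1)\,\zeta_2$ with $\zeta_2=\var\!\big(h_t(\xi_1,\xi_2)\big)$. As $h_t(\xi_1,\xi_2)\le 3M$ deterministically for $t<1/M$ (each point of $\eta$ occupies its own cell, and an interval of length $2t<2/M$ meets at most three cells), we get $\zeta_2\le\Exp{h_t^2}\le 3M\,\Exp{h_t}=6M^3t=O(M^2s/L)$, so $\var(B_L)=O(LM^2s)$ and $\var(N^{-1}B_L)=O(s/L)$, again summable in $K$. Since $N^{-1}\Exp{B_L}=2s(L-1)/L\to 2s$, Chebyshev's inequality and Borel–Cantelli give $N^{-1}B_L\to 2s$ almost surely; together with $N^{-1}W_L\to 0$ this proves $R(s,K^2M)\to 2s$ almost surely.

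The one genuinely load–bearing step is the vanishing of the first–order Hoeffding projection: it is exactly what upgrades the variance bound from $o(1)$ to a summable one and thereby makes Borel–Cantelli applicable along the subsequence $N=K^2M$; if one instead tried to run the second moment method along $N=KM$, the variance would only be $O(s/K)$, which is not summable, so the sparser subsequence (or an equivalent device) seems unavoidable. Everything else — the two monotone sandwich estimates, the Markov bound for $W_L$, the identities $\ExpCon{h_t(\xi,\eta)}{\xi}=2M^2t$ and $N^{-1}\Exp{B_L}\to 2s$ — is routine bookkeeping. A more pedestrian alternative to the $U$–statistic viewpoint would be to expand $\var(B_L)$ directly into covariances of the indicators and verify that every term not sharing a block cancels while the remaining ones sum to $O(LM^2s)$, but the $U$–statistic formulation makes the decisive cancellation transparent.
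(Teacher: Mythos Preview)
Your proof is correct and takes a genuinely different route from the paper's. The paper does not exploit the block structure explicitly; instead it writes each $X_n$ as a deterministic grid point plus an independent uniform perturbation, introduces the auxiliary function $h_{s,N}(x)=\sum_i \Exp{\ind_{\{\|X_i-x\|\le s/N\}}}$, and computes $\Exp{R(s,N)}$ and $\var(R(s,N))$ by reducing sums over indices to Lebesgue integrals via the identity $\tfrac1N\sum_i \Exp{f(X_i)}=\int_0^1 f$. The variance bound $\var(R(s,N))=\mathcal{O}(1/N)$ is obtained by brute-force case analysis on how many of the four indices $i,j,k,l$ coincide, followed by explicit triple integrals. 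Chebyshev and Borel--Cantelli are then applied along the subsequence $(kM)^4$ with deviation $N^{-1/4}$, and the sandwich argument fills in the remaining $N$.

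Your within/between split and the $U$-statistic viewpoint are more structural: the observation that each coordinate of a jittered sample is \emph{marginally} $\uniform[0,1)$ forces $\ExpCon{h_t(\xi,\eta)}{\xi}$ to be constant, so the first Hoeffding projection vanishes and the variance of the between-block count drops to $\mathcal{O}(L\zeta_2)$ automatically, without any integral computations. This makes transparent \emph{why} the variance is $\mathcal{O}(1/N)$ rather than merely $o(1)$, whereas in the paper this emerges only after the cancellation in the case $\#K=4$. Your crude bound $h_t\le 3M$ and $\zeta_2\le 3M\,\Exp{h_t}$ replaces the paper's explicit evaluation of the triple integral. The two approaches yield the same variance order and the same endgame; yours is shorter and isolates the key mechanism, while the paper's computation additionally produces exact leading constants (e.g.\ the $16s^2/N$ term) that your argument does not need and does not track.
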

As the $M$-batch jittered sample consists of \textit{independent} copies of finite (dependent) samples, this result might have been expected by experts despite that its proof is non-trivial. Therefore, we study a second approach to extend finite jittered samples to sequences, namely sequential jittered sampling. This approach leads to a more involved dependence structure and is defined inductively. We start by creating two points using jittered sampling. Assuming that $N=2^n$ points $X_1,\ldots,X_N$ have already been constructed, we split the unit interval into $2N$ intervals 
$$\left[0,\frac{1}{2N}\right), \left[\frac{1}{2N},\frac{2}{2N}\right), \ldots, \left[\frac{2N-1}{2N},1\right).$$ 
Next, we identify those intervals void of any point so far in increasing order and call them 
$$\left[c_1,c_1+\frac{1}{2N}\right), \ldots, \left[c_N,c_N+\frac{1}{2N}\right).$$ Finally, given a random permutation $\pi$ on $\{1,\ldots,N\}$ and random points $Z_n\sim\uniform[0,1)$ we construct the new points $X_{N+1},\ldots,X_{2N}$ via
\[
    X_{N+k}=c_{\pi(k)}+\frac{Z_k}{2N}.
\]
In contrast to $M$-batch jittered samples, sequential jittered samples do not have PPC but only weak PCC.

\begin{theorem}\label{thm:seq_LHC_weak_PPC}
    Let $X=\mathseq{X_n}{n\in\N}$ be a sequential jittered sampling sequence.
    \begin{enumerate}
        \item[(i)] The sequence $X$ does not have PPC almost surely.
        \item[(ii)] In contrast, $X$ almost surely has $\alpha$-PPC for every $\alpha\in(0,1)$.
    \end{enumerate}
\end{theorem}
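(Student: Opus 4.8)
\emph{Proof plan.} The strategy is to exploit the dyadic self-similarity of the construction. Both parts rest on the following fact, which I would establish first by induction on $n$: at every dyadic time $N=2^n$ the set $\{X_1,\dots,X_{2^n}\}$ contains exactly one point in each dyadic cell $I^{(n)}_k:=[k2^{-n},(k+1)2^{-n})$, so it is a jittered sample of $2^n$ points; moreover, writing this point as $x_k=(k+\theta_k)2^{-n}$, the offsets $\theta_0,\dots,\theta_{2^n-1}$ are i.i.d.\ uniform on $[0,1)$. The inductive step is immediate from the splitting-and-filling rule: when a cell $I^{(n-1)}_j$ is split, its point is moved into one of the two halves with new offset $\{2\theta^{(n-1)}_j\}$, which is uniform and independent of which half it lands in, while the companion half receives a fresh independent uniform offset; since the permutation assigning new points to empty halves is independent of all offset values, the $2^n$ new offsets are a random permutation of $2^n$ i.i.d.\ uniforms, hence i.i.d.\ uniform.

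For (i) I would fix $s\in(0,1)$, say $s=\tfrac{1}{2}$, and work along $N=2^n$. Because the points are stratified at scale $2^{-n}$, once $n$ is large the only pairs $i\ne j$ with $\norm{x_i-x_j}\le s2^{-n}$ are those in cyclically consecutive cells $I^{(n)}_k,I^{(n)}_{k+1}$, and a short computation gives $\norm{x_i-x_j}=(1+\theta_{k+1}-\theta_k)2^{-n}$ (indices mod $2^n$), so the pair is counted iff $\theta_k-\theta_{k+1}\ge 1-s$, an event of probability $s^2/2$. Hence
\[
R(s,2^n)=\frac{2}{2^n}\sum_{k=0}^{2^n-1}\ind_{\{\theta_k-\theta_{k+1}\ge 1-s\}},
\]
a normalised sum of a stationary, $1$-dependent $\{0,1\}$-valued family, so $\Exp{R(s,2^n)}=s^2$ and $\var\big(R(s,2^n)\big)=O(2^{-n})$. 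Chebyshev's inequality and the Borel--Cantelli lemma then give $R(s,2^n)\to s^2$ almost surely, and since $s^2\neq 2s$ the sequence $R(s,N)$ cannot converge to $2s$; thus $X$ almost surely does not have PPC.

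For (ii) I would fix $\alpha\in(0,1)$ and $s>0$, and for $2^n\le N<2^{n+1}$ set $M=2^{n+1}$. The construction keeps exactly one point in each level-$n$ cell and fills the later points into the complementary empty halves, the set of filled halves being a uniformly random $(N-2^n)$-subset determined by the permutation; hence $x_1,\dots,x_N$ occupy $N$ distinct level-$(n+1)$ cells. Let $O\subseteq\Z/M\Z$ be the set of occupied indices. For $x_i$ in cell $a_i$, the number of other points within $sN^{-\alpha}$ equals $\card{(O\cap B(a_i,T))}-1$ up to an $O(1)$ error from the offsets, where $T:=sMN^{-\alpha}$ and $B(a,r)$ is the cyclic ball of radius $r$ in $\Z/M\Z$. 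The heart of the matter is the all-scales equidistribution estimate
\[
\card{(O\cap B(a,r))}=(2r+1)\frac{N}{M}+O\!\left(\sqrt{r\log N}\right),
\]
uniformly in $a$ and $r$ for all large $N$: since every level-$n$ cell contributes exactly one element to $O$ regardless of the random choice of half, the only fluctuation comes from the random subset of filled halves, and the bound follows from Bernstein's inequality together with a union bound over $a$, $r$, and $N$ and Borel--Cantelli. Summing over the $N$ points, discarding the diagonal and dividing by $N^{2-\alpha}$ would then yield
\[
R_\alpha(s,N)=2s+O\!\left(N^{-(1-\alpha)}\right)+O\!\left(N^{-(1-\alpha)/2}\sqrt{\log N}\right)\longrightarrow 2s,
\]
so $X$ almost surely has $\alpha$-PPC.

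The inductions and the Chebyshev/Borel--Cantelli step in (i) are routine bookkeeping; the main obstacle is the uniform all-scales equidistribution estimate for the random occupied set $O$ in (ii). It must hold simultaneously for every window width, so that a single almost-sure event covers the scale $N^{-\alpha}$ attached to each $N$, and the error terms must then be checked to beat the normalisation $N^{2-\alpha}$ for \emph{every} $\alpha<1$. In particular, the cruder estimate coming from the (merely $O(N^{-1/2})$) global discrepancy of $X$ would only deliver $\alpha<1/2$, so local, window-by-window control of $O$ is indispensable.
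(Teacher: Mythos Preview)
Your argument is correct. Part~(i) is essentially the paper's Proposition~\ref{prop:seq_js_not_PPC}: both restrict to $N=2^n$, observe that only pairs in cyclically adjacent level-$n$ cells can be $s/N$-close, and compute the resulting probability; your i.i.d.-offset lemma is a clean way to package this computation, and you go slightly further by upgrading the expectation statement to almost-sure convergence of $R(s,2^n)$ to $s^2$ along the dyadic subsequence.

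Part~(ii) takes a genuinely different route. The paper uses the second-moment method: Lemma~\ref{lem:complexity_N_of_js_sums} provides $\Exp{R_\alpha(s,N)}=2s+\mathcal{O}(N^{-(1-\alpha)})$ and, via a case split on $\#\{i,j,k,l\}$ whose hardest case (four distinct indices) is a conditional-probability calculation, also $\var(R_\alpha(s,N))=\mathcal{O}(N^{-(1-\alpha)})$; Chebyshev, Borel--Cantelli along a lacunary subsequence, and the usual sandwich then finish the job. You instead argue pathwise: a single almost-sure all-scales equidistribution estimate for the occupied level-$(n+1)$ cell set $O$---each level-$n$ cell contributes one occupied half deterministically up to $O(1)$ boundary terms, and the remaining fluctuation is a hypergeometric count controlled by Hoeffding/Bernstein for sampling without replacement, made uniform by a union bound over $a$, $r$, $N$---already forces $R_\alpha(s,N)\to 2s$ for every $\alpha<1$ and every $s$ from one almost-sure event, with no second-moment calculation at all. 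Your approach thus sidesteps the technical four-index variance bound and delivers all $\alpha$ and $s$ simultaneously; the paper's approach, on the other hand, never leaves the statistic $R_\alpha$ and runs in exact parallel to the $M$-batch proof of Theorem~\ref{thm:batch_js_poiss_pair_corr}.
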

While it is not difficult to see that sequential jittered sampling cannot have Poissonian pair correlations by looking at the special case $s=\tfrac{1}{2}$, see Proposition~\ref{prop:seq_js_not_PPC}, we need to carefully apply point counting arguments to prove part (ii) of Theorem~\ref{thm:seq_LHC_weak_PPC}.\\

Let us conclude this introduction by briefly discussing some open questions that could be explored in future research based on the results of this paper: For example, one might ask what happens exactly if a random walk has a distribution without a Lebesgue density? Some simple examples were already discussed, but a deeper investigation of criteria for PPC would be worthwhile. For instance, it was shown in \cite[Lemma 3]{schatte:asym_distr_sum}, that the Fourier coefficients $c_r$ of a discretely distributed random variable satisfy $\sup_{r\neq 0}|c_r|=1$, meaning that we cannot use Theorem~\ref{thm:schatte_approx_unif}(2) to prove 
Theorem \ref{thm:rand_walk_ppc}.\\[12pt]
Moreover, it is open if our results extend to some of the many known generalizations of the PPC concepts which we do not consider here. For instance, a generalization of PPC which looks at $k$-tupels for $k\geq 2$ instead of pairs has been introduced in \cite{cohen:poiss_pair_higher_diff} and generalized to the weak setting in \cite{hauke:poiss_corr_higher_order}. Versions of PPC in higher dimensions have been studied e.g. in \cite{NP07,hinrichs:mult_dim_poiss-corr,bera:on_high_dim_poiss_pair_corr,Ste20}, and for the p-adic integers in \cite{Wei23}. Whether or not the ideas and techniques of the paper can be adapted to these generalizations remains open for future research.\\[12pt]
Finally, it may be possible to adapt the methods used for the proof of Theorem~\ref{thm:rand_walk_ppc} to prove similar results for more general Markov chains. This seems promising, since the long-time behavior of the sum of certain types of Markov chains on the torus has been studied in the literature, see for example \cite[Theorem 4.9]{levin:markov_chain} or \cite[Theorem 1.1]{diaconis2011geometric}.

\section{Random walk on the torus}\label{sec:rand_walk}

We will examine the behavior of a one-dimensional random walk $X=\mathseq{X_n}{n\in\N}$ on the torus in this section. The random walk starts in $x_1$ and its step size at stage $i$ is given by a random variable $Y_i$ with distribution $D$. As mentioned in the introduction, the random walk has Poissonian pair correlations, if $Y_i$ has a probability density that is in $L^p$ for some $1<p\le \infty$. Intuitively, this can be explained as follows: for $i<j$, we have $X_j-X_i=\sum_{k=i+1}^j Y_k$. If $D$ has mean $\mu$ and standard deviation $\sigma$, then $X_j-X_i$ is approximately distributed as a stretched normal distribution with parameters $(j-i)\mu$ and $\sqrt{j-i}\sigma$ according to the central limit theorem. Since we are working on the torus, i.e. calculations are made $\bmod\ 1$, this stretched normal distribution approaches the uniform distribution as illustrated in Figure \ref{fig:norm_distr_mod_1}. Moreover, it is well-known that an i.i.d. sequence of independent, uniformly distributed random variables almost surely possess Poissonian pair correlations.\\[12pt]
\begin{figure}[ht]
    \centering
    \begin{tikzpicture}[scale=2]

  \draw[->] (-2.2,0) -- (3.2,0) node[right] {$x$};
  \draw[->] (0,-0.2) -- (0,1.2);
  \def\sig{0.5}

  \draw[color=yellow,domain=-2:3,samples=200,line width=0.4mm] plot(\x,{exp(-(\x-0.5)^2/(2*\sig^2))/(sqrt(2*pi)*\sig)});
  \foreach \s in {-5,...,4}{\draw[color=red,domain=0:1] plot(\x,{exp(-(\x+\s+0.5)^2/(2*\sig^2))/(sqrt(2*pi)*\sig)});};
  \draw[color=blue,domain=0:1] plot(\x,{exp(-(\x-5.5)^2/(2*\sig^2))/(sqrt(2*pi)*\sig)+exp(-(\x-4.5)^2/(2*\sig^2))/(sqrt(2*pi)*\sig)+exp(-(\x-3.5)^2/(2*\sig^2))/(sqrt(2*pi)*\sig)+exp(-(\x-2.5)^2/(2*\sig^2))/(sqrt(2*pi)*\sig)+exp(-(\x-1.5)^2/(2*\sig^2))/(sqrt(2*pi)*\sig)+exp(-(\x-0.5)^2/(2*\sig^2))/(sqrt(2*pi)*\sig)+exp(-(\x+0.5)^2/(2*\sig^2))/(sqrt(2*pi)*\sig)+exp(-(\x+1.5)^2/(2*\sig^2))/(sqrt(2*pi)*\sig)+exp(-(\x+2.5)^2/(2*\sig^2))/(sqrt(2*pi)*\sig)+exp(-(\x+3.5)^2/(2*\sig^2))/(sqrt(2*pi)*\sig)+exp(-(\x+4.5)^2/(2*\sig^2))/(sqrt(2*pi)*\sig)+exp(-(\x+5.5)^2/(2*\sig^2))/(sqrt(2*pi)*\sig)});

  \draw[color=black] (0.05,1)--(-0.05,1) node[left] {$1$};
  \draw[color=black] (1,0.05)--(1,-0.05) node[below] {$1$};
  \draw[color=black] (0.05,0) node[below] {$0$};
\end{tikzpicture}
    \caption{The density of a normal distribution $\mathcal{N}(0.5, 0.5)$ (yellow), the multiple parts of its density mod $1$ (red) and the sum of these components, i.e. the normal distribution mod $1$ (blue).}
    \label{fig:norm_distr_mod_1}
\end{figure}
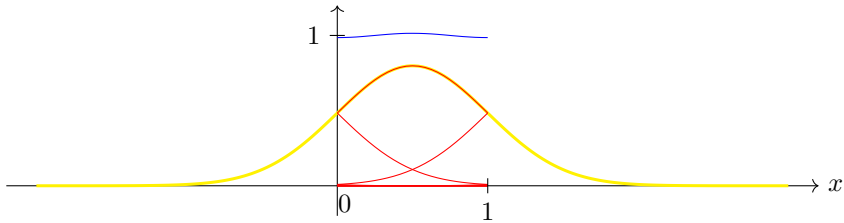

The statement that the distribution of the sum of random variables on the torus converges to the uniform distribution was first rigorously proved by Levy in \cite{levi:add_var_circ}. For our purposes we will however need a quantitative version, which depends on the Fourier coefficients of the random variables, as provided by Schatte in \cite{schatte:asym_distr_sum}.

Recall that we denote the fractional part of $x\in \R$ by $\{x\}$. 
In the following, all intervals that appear have to be understood as intervals on the torus. 
Furthermore, for a given a random variable $X_i$ with a Lebesgue density, we will write $\delta_{X_i}$ for its density.

\begin{theorem}{\cite[Lemma 2, Theorem 3]{schatte:asym_distr_sum}}\label{thm:schatte_approx_unif}
    Let $Y$ be a random variable on $[0,1)$, and let $Y_1,Y_2, \ldots$ be independent copies of $Y$. Let $\mathseq{c_r}{r\in\Z}$ be the Fourier coefficients of $Y$ defined by 
    \[
      c_r = \int_0^1 e^{2\pi irx} \mathrm{d}F(x),
    \]
    where $F(\cdot)$ is the cumulative distribution function of $Y$. Then the following statements hold:
    \begin{enumerate}
        \item[(i)] If $Y$ has an integrable Lebesgue density, then $$\sup_{r\neq 0}\vert c_r\vert<1.$$
        \item[(ii)] If $\sup_{r\neq 0}\vert c_r\vert<1$, then there exist $C\geq 0$ and $\omega<1$ such that for all $n\in \N$ the cumulative distribution function $G_n(x)$ of $\{Y_1+\ldots+Y_n\}$ satisfies
\[
    \sup_{0\leq x<1}\vert G_n(x)-x\vert\leq C\omega^n.
\]    
    \end{enumerate}
\end{theorem}

\iftrue
Theorem~\ref{thm:schatte_approx_unif} gives a quantitative bound on how much the sum of the $Y_i$ may deviate from a uniformly distributed random variable. Note that it cannot be applied directly to the pair correlation statistic $R(s,N)$ because the deviation is comparably large for small $n$. Although it will nevertheless be the basis for the proof of Theorem~\ref{thm:rand_walk_ppc}, some extra work is therefore required. We start our preparations with the next lemma.

\begin{lemma} \label{lem:pre_randomsum}
Let $Y$ be a random variable on $[0,1)$ with probability density $\delta_Y\in L^p([0,1))$ for some $p\in [1, \infty]$, 
and let $Y_1,Y_2, \ldots$ be independent copies of $Y$. Then we have for every measurable set $A\subseteq [0,1)$ that
\begin{equation}\label{eq:basic_prob_est_pre_1}
\Prob{\{Y_1+ \cdots + Y_n \}\in A } \le \|\delta_Y\|_p \cdot \lambda(A)^{1-1/p}
\end{equation}
Now assume $p <\infty$. We denote the probability density of the sum on the torus $\{Y_1+ \cdots + Y_n\}$ by $\delta_n$. Then the family of functions  $(|\delta_n|^p)_{n\in\N}$ is uniformly integrable. For $\varepsilon \in (0,1]$ let $A(\varepsilon)$ be a  measurable set with Lebesgue measure 
$\lambda(A(\varepsilon)) = \varepsilon$. Then we obtain, in particular,
\begin{equation}\label{eq:basic_prob_est_pre_2}
 \Prob{\{Y_1+ \cdots + Y_n \}\in A(\varepsilon) } =  o( \varepsilon^{1-1/p}).
\end{equation}
In the last estimate the implicit constant in the small-$o$-notation does not depend on $n$, $\varepsilon$ or the specific
choice of $A(\varepsilon)$.
\end{lemma}
\begin{proof}
For every  $n\in\N$ the probability density $\delta_n$ of $\{Y_1 + Y_2 + \cdots + Y_n \}$
is simply the $n$-fold convolution $\delta_Y \ast \cdots \ast \delta_Y$ (taken over the torus with respect to addition modulo $1$) of $\delta_Y$, that is 
$\delta_n = \delta_Y \ast \delta_{n-1}$. 
Using H\"older's inequality, we obtain for any measurable set $A$ that
\begin{equation}\label{inequality_Holder}
\begin{split}
\Prob{\{Y_1+ \cdots + Y_n \}\in A } &= \int_{0}^{1}  \delta_n(x)\ind_A(x) \, {\rm d}x = \int_{0}^{1}  (\delta_n(x) \ind_A(x))\ind_A(x)\, {\rm d}x\\
&\leq \| \delta_n \ind_A \|_p \left( \int_{0}^{1} \ind_A(x) \,{\rm d}x \right)^{1-1/p}= \| \delta_n \ind_A \|_p \lambda(A)^{1-1/p}.\\
\end{split}
\end{equation}
We observe next that
\begin{equation}\label{12_strich}
\begin{split}
\|\delta_n \ind_A \|_{p} 
&= \left\| \int_{0}^{1}  \left( \delta_Y(\cdot - y) \ind_A \right) \delta_{n-1}(y) \, {\rm d}y \right\|_p 
\le  \int_{0}^{1} \left\| \delta_Y(\cdot - y) \ind_A \right\|_p |\delta_{n-1}(y)| \, {\rm d}y  \\
&= \int_{0}^{1} \left\| \delta_Y \ind_{A-y}\right\|_p |\delta_{n-1}(y)| \, {\rm d}y,
\end{split}
\end{equation}
where $A-y$ denotes the set $\mathset{a-y}{a\in A}$. 
Moreover, we get for all $y\in [0,1)$
\begin{equation}\label{trivial_bound_A}
\left\| \delta_Y \ind_{A-y} \right\|_p   \le \left\| \delta_Y \right\|_p.
\end{equation}
Since $\delta_{n-1}$ is a probability measure, the estimates \eqref{inequality_Holder}, \eqref{12_strich}, and \eqref{trivial_bound_A}
yield inequality \eqref{eq:basic_prob_est_pre_1}.\\
Now let $p<\infty$. Then the function $\delta^p_Y$ is Lebesgue integrable, hence the trivial family just consisting
of $\delta^p_Y$ is uniformly integrable, implying that for any $\varepsilon >0$ there exists a $\sigma = \sigma(\varepsilon)$
such that for all measurable sets $B$ with $\lambda(B) \le \sigma$ we have $\int_B |\delta_Y(x)|^p \,{\rm d}x < \varepsilon$,
see, e.g., \cite[Thm.~6.18 \& 6.24]{KlenkePT}. For $A=A(\varepsilon)$ this yields  $\int_{A(\varepsilon)} |\delta_Y(x)|^p \,{\rm d}x = o(1)$
as $\varepsilon$ tends to $0$.
Hence we get 
\begin{equation}\label{uniformly_integrable_A-y}
\left\| \delta_Y 1_{A-y} \right\|_p^p
= \int_{A-y} |\delta_Y(x)|^p \, {\rm d}x = o(1),
\end{equation}
uniformly in $y\in [0,1)$, since $\lambda(A-y) = \lambda(A) = \varepsilon$.
In particular, estimates \eqref{12_strich}
and  \eqref{uniformly_integrable_A-y} show that the uniform integrability of $\delta_Y^p$ implies the uniform integrability
of the whole family $\left( \delta_n^p \right)_{n\in\N}$.
Moreover, since $\delta_{n-1}$ is a probability measure, the estimates  \eqref{inequality_Holder}, \eqref{12_strich}, and \eqref{uniformly_integrable_A-y}
yield estimate \eqref{eq:basic_prob_est_pre_2}.
\end{proof}

\begin{corollary} \label{lem:randomsum}
Let $Y$ be a random variable on $[0,1)$ with probability density $\delta_Y \in L^p([0,1))$ for some $1\le p \le \infty$, and let $Y_1,Y_2, \ldots$ be independent copies of $Y$. Let $s_0>0$ be arbitrary and $s \in [0,s_0)$. Then there exists an $\omega \in (0,1)$ such that for every $n, N\in\N$ with $s/N \in (0,1/2)$ and every $x\in [0,1)$, 
\begin{equation}\label{eq:basic_prob_est}
\Prob{\{Y_1+ \cdots + Y_n \}\in \left[\ x - \frac{s}{N} , x + \frac{s}{N}\right]} = \frac{2s}{N} + \mathcal{O} \left( \omega^{n} \right).
\end{equation}
Furthermore, we have
\begin{equation}\label{eq:basic_prob_est_2}
 \Prob{\{Y_1+ \cdots + Y_n \}\in \left[\ x - \frac{s}{N} , x + \frac{s}{N}\right]} = \mathcal{O} \left( \frac{1}{N^{1-1/p}}\right).
\end{equation}
If $p=1$ we still have
\begin{equation}\label{eq:basic_prob_est_3}
 \Prob{\{Y_1+ \cdots + Y_n \}\in \left[\ x - \frac{s}{N} , x + \frac{s}{N}\right]} = o \left( 1 \right).
\end{equation}
In all three estimates the constants in the $\mathcal{O}$- and $o$-notation do not depend on $n$, $N$, $x$, or $s$.
\end{corollary}
To avoid too cumbersome notation, let us from now on use the short hands 
\begin{equation} \label{eq:abbreviations}
\begin{split}
P_{Z_1}(x) &:=  \Prob{\{Z_1\}\in \left[ - \frac{s}{N}-x, \frac{s}{N}-x \right]}, \\
P_{Z_1,Z_2}(x) &:=  \Prob{\{Z_2\}\in \left[ - \frac{s}{N}-x, \frac{s}{N}-x\right] \wedge \{Z_1\}\in \left[- \frac{s}{N}-x, \frac{s}{N}-x \right]} \\
\end{split}
\end{equation}
for arbitrary random variables $Z_1, Z_2$, and let us suppress the argument $x\in [0,1)$ in case that $x=0$.
\begin{proof} 
Let $G_n(x)$ denote the cumulative distribution function  of $\{Y_1+\ldots+Y_n\}$. In the case where
$0 \le \{x - \frac{s}{N}\} < \{x + \frac{s}{N}\} <1$, we actually have $0 \le x - \frac{s}{N} < x + \frac{s}{N} <1$, because $s/N<1/2$,
and therefore obtain from Theorem~\ref{thm:schatte_approx_unif}
\begin{equation*}
\begin{split}
&P_{Y_1+ \cdots + Y_n}(x) = G_n\left( x + \frac{s}{N} \right) - G_n\left( x - \frac{s}{N} \right)\\
&= x + \frac{s}{N} + \left( G_n\left( x + \frac{s}{N} \right) - \left(x + \frac{s}{N}\right) \right) 
- \left( x - \frac{s}{N} + \left( G_n\left( x - \frac{s}{N} \right) - \left(x - \frac{s}{N}\right) \right) \right)\\
&\in\left[\frac{2s}{N} -2C \omega^{n},\frac{2s}{N} +2C \omega^{n}\right]
\end{split}
\end{equation*}
for some $\omega \in (0,1)$ and some $C>0$, both independent of $n$, $N$, $x$, and $s$.
The second case $0 \le \{x + \frac{s}{N}\} < \{x - \frac{s}{N}\} <1$ can be handled similarly. 

Formula \eqref{eq:basic_prob_est_2} follows immediately from estimate \eqref{eq:basic_prob_est_pre_1}, and
formula \eqref{eq:basic_prob_est_3} is a consequence of formula \eqref{eq:basic_prob_est_pre_2}.
\end{proof}

Note that Corollary~\ref{lem:randomsum} can (and will) also be applied in the case $n=N$.

\begin{proof}[Proof of Theorem \ref{thm:rand_walk_ppc}] 
One of the final steps in the proof will be to apply \\Chebyscheff's~inequality. This is a typical trick to prove the PPC property for random sequences and has been applied several times in the literature, see e.g. \cite[Theorem 1]{rudnick:pair_corr_polynom} and \cite[Theorem 1]{hinrichs:mult_dim_poiss-corr}. Therefore, we will calculate the expected value and the variance of 
    \begin{equation} \label{eq:RsN}
        R(s,N)=\frac{1}{N}\sum_{1\leq i\neq j\leq N}\ind_{\left\{\norm{X_i-X_j}\leq \frac{s}{N}\right\}}
    \end{equation}
for $s > 0$ first in order to show that $\lim_{N\to\infty}R(s,N)=2s$ holds almost surely.\\[12pt]
Now we consider a monotone increasing function $\nu:\N\to \N_0$ with $\lim_{N\to \infty} \nu(N) = \infty$. In the case where $p>1$, we define $\nu$ by $\nu(N) = \lceil -\ln( N )/\ln(\omega) \rceil$ with $\omega < 1$ as in Corollary~\ref{lem:randomsum}.
In the case where $p=1$, we choose $\nu$ in such a way, that the product of $\nu(N)$ and the left hand side of \eqref{eq:basic_prob_est_3} goes to zero if $N$ tends to infinity, i.e., (with some abuse of notation) $\lim_{N\to \infty} \left( \nu(N)\cdot o_N(1) \right) = 0$, where the subscript in $o_N(1)$ indicates that the implicit constant is independent of $N$.
Furthermore, for $N\in \N$, $i\leq N$ we define
\[
\mu_{i,N}:=\min\{ i+\nu(N),N\}.
\]
Note that 
\[
\Exp{R(s,N)} = S_{1,N}+ S_{2,N}, 
\]
where 
\begin{equation*}
\begin{split}
S_{1,N} &= \frac{2}{N} \sum_{i=1}^N \sum_{j=i+1}^{\mu_{i,N}} \Prob{\norm{X_j-X_i}\leq\frac{s}{N}},\\
S_{2,N} &= \frac{2}{N} \sum_{i=1}^N \sum_{j= \mu_{i,N} + 1}^N \Prob{\norm{X_j-X_i}\leq\frac{s}{N}}.
\end{split}
\end{equation*}
In the next step we establish $\lim_{N\to\infty} S_{2,N} = 2s$ and afterwards $\lim_{N\to\infty} S_{1,N} = 0$. With
\begin{equation}\label{geom_sum_trick}
\begin{split}
    \frac{2}{N}\sum_{i=1}^N\sum_{j=\mu_{i,N}+1}^N\omega^{j-i}&\leq \frac{2}{N}\sum_{i=1}^N\omega^{\nu(N) +1}\sum_{j=0}^{N-\mu_{i,N}-1}\omega^{j}\\
    &\leq \frac{2}{N}N\omega^{\nu(N)+1}\frac{1}{1-\omega}=\frac{2\omega^{\nu(N)+1}}{1-\omega}
\end{split}
\end{equation}
and \eqref{eq:basic_prob_est} we get
\begin{equation*}
\begin{split}
    S_{2,N} &= \frac{2}{N}\sum_{i=1}^N\sum_{j=\mu_{i,N}+1}^{N}\Prob{\{Y_{i+1} + \cdots + Y_j\} \in \left[ -\frac{s}{N}, \frac{s}{N} \right] }\\
    &=\frac{2}{N}\sum_{i=1}^N\sum_{j=\mu_{i,N}+1}^{N}\frac{2s}{N}+\mathcal{O}_N\left(\omega^{\nu(N)}\right).
\end{split}
\end{equation*}
Since $\omega^{\nu(N)} \to 0$ for $N\to \infty$, we obtain 
\begin{equation*}
    \lim_{N\to\infty} S_{2,N} =\lim_{N\to\infty}\frac{2}{N}\sum_{i=1}^N\sum_{j=\mu_{i,N}+1}^{N}\frac{2s}{N}=2s.
\end{equation*}
We now establish $\lim_{N\to\infty} S_{1,N} = 0$. In the case where $p>1$, inequality~\eqref{eq:basic_prob_est_2} implies
\begin{align*}
    S_{1,N} &= \frac{2}{N}\sum_{i=1}^N\sum_{j=i+1}^{\mu_{i,N}}\Prob{\norm{X_j-X_i}\leq\frac{s}{N}}\\
    &\leq\frac{2}{N}N\nu(N) \cdot \mathcal{O}_N \left( \frac{1}{N^{1-1/p}} \right)\\
    &=\mathcal{O}_N\left(\frac{\ln(N)}{N^{1-1/p}}\right).
\end{align*}
In the case where $p=1$, estimate \eqref{eq:basic_prob_est_3} implies
\begin{align*}
    S_{1,N} &\leq\frac{2}{N}N\nu(N) \cdot o_N \left( 1 \right) = 2 \nu(N) \cdot o_N \left( 1 \right).
\end{align*}
Hence our case specific choices of $\nu$ always yield $\lim_{N\to\infty} S_{1,N} = 0$.

Combining the results above we get for all $1\le p \le \infty$ the desired identity
\[
\lim_{N\to \infty} \Exp{R(s,N)} = \lim_{N\to\infty} S_{1,N} + \lim_{N\to\infty} S_{2,N} = 2s.
\]
Now we confine ourselves to the case $1<p\le \infty$ and study the variance 
\[
\var(R(s,N))= \Exp{R(s,N)^2}-\left(\mathbb{E}\left( R(s,N) \right) \right)^2.
\]
By using the representation \eqref{eq:RsN} of $R(s,N)$ and the definitions of $P_{Z_1}$ and $P_{Z_1,Z_2}$ for arbitrary random variables $Z_1$ and $Z_2$ in \eqref{eq:abbreviations}, we obtain
\begin{align}\label{eq:quad_sum_rand_walk}
\var(R(s,N)) =\frac{4}{N^2}\sum_{i=1}^N\sum_{j=i+1}^N\sum_{l=1}^N\sum_{k=l+1}^N
    \left( P_{X_j-X_i,X_k - X_l} - P_{X_j-X_i}P_{X_k - X_l} \right).
\end{align}
We will now take a look at three different regimes of the relation between the indices $i,j,k,l$ and show that the  corresponding (sub-)sums  
in \eqref{eq:quad_sum_rand_walk} are of order $\mathcal{O}_N(\ln(N)/N^{1-1/p})$. Since $i<j$ and $l<k$ and since formula \eqref{eq:quad_sum_rand_walk} 
is symmetric with respect to the pairs $(i,j)$ and $(l,k)$,
we only need to consider three cases, namely  
\begin{enumerate}
\item $i< j \le l < k$, 
\item $i \le l < j \le k$,
\item $i < l < k < j$.
\end{enumerate}

\underline{Case 1}: $i< j\leq l <k$. Now the stochastic independence of the random variables $Y_1, Y_2, \ldots$ implies that 
$X_j-X_i = Y_{i+1} + \cdots + Y_j$ and $X_k-X_l = Y_{l+1} + \cdots + Y_k$ are independent, and therefore it follows 
that 
\begin{equation*}
P_{X_j-X_i,X_k - X_l} =  P_{X_j-X_i}P_{X_k - X_l},
\end{equation*}
yielding that the corresponding subsum in \eqref{eq:quad_sum_rand_walk} is even zero.

\underline{Case 2}: $i \le l < j \le k $. We consider the subsum 
\begin{equation}\label{subsum_case_2}
     \frac{4}{N^2}\underset{i \le l < j \le k}{\sum_{i=1}^N\sum_{j=i+1}^N\sum_{l=1}^N\sum_{k= l+1}^N} \left( P_{X_j-X_i,X_k - X_l} - P_{X_j-X_i}P_{X_k - X_l} \right)
\end{equation}
of \eqref{eq:quad_sum_rand_walk}.
Note that in this case, due to the order of the indices, $X_l-X_i = Y_{i+1} + \cdots + Y_l$, $X_j-X_l = Y_{l+1} + \cdots + Y_j$, and $X_k-X_j = Y_{j+1} + \cdots + Y_k$ are independent random variables. We use $X_j-X_i = (X_j-X_l) + (X_l-X_i)$ and $X_k-X_l = (X_k-X_j) + (X_j-X_l)$ to obtain 
\begin{align}
\label{eq:int_density_prob}
\begin{split}
    P_{X_j-X_i, X_k-X_l} &= P_{X_l - X_i, X_k - X_j}(X_j - X_l)\\
    &=\int_{-\frac{1}{2}}^{\frac{1}{2}} P_{X_l - X_i, X_k - X_j}(x) \,\delta_{X_j-X_l}(x) \diff x\\
    &=\int_{-\frac{1}{2}}^{\frac{1}{2}} P_{X_l - X_i}(x) P_{ X_k - X_j}(x)\, \delta_{X_j-X_l}(x) \diff x.
\end{split}
\end{align}
Note that because of \eqref{eq:basic_prob_est_2} and the fact 
that $\delta_{X_j-X_l}$ is a probability density, we may always resort to the estimates
\begin{equation}\label{resort_estimates}
  P_{X_j-X_i}P_{X_k-X_l} = \mathcal{O}_N \left( \frac{1}{N^{2-2/p}} \right) 
  \hspace{3ex}\text{and}\hspace{3ex}
  P_{X_j-X_i, X_k-X_l} = \mathcal{O}_N \left( \frac{1}{N^{2-2/p}} \right).
\end{equation}
Consequently, any subsum of \eqref{subsum_case_2} 
that contains $\mathcal{O}_N\left( \left( \nu(N)\right)^2 N^2\right)$ terms
is already of order $\mathcal{O}_N\left(  \left( \frac{\nu(N)}{N^{1-1/p}}\right)^2 \right)$. 
Let us first consider the subsum 
\begin{equation*}
    S_a:= \frac{4}{N^2}\sum_{i=1}^N\sum_{l=\mu_{i,N}+1}^N\sum_{j=l+1}^N\sum_{k=\mu_{j,N}+1}^N
    \left( P_{X_j-X_i,X_k - X_l} - P_{X_j-X_i}P_{X_k - X_l} \right).
\end{equation*}
of \eqref{subsum_case_2}.
Notice that in this case, due to the order of the indices, we have
\[
\frac{\omega^{j-i}}{N}  +  \frac{\omega^{k-l}}{N} +  \omega^{j-i + k -l}
= \mathcal{O}_N \left( \frac{\omega^{l-i}}{N} \right) + \mathcal{O}_N \left( \frac{\omega^{k-j}}{N} \right) + \mathcal{O}_N \left( \omega^{l-i +k-j } \right).
\]
Hence, by applying \eqref{eq:basic_prob_est} to $P_{X_j - X_i}$, $P_{X_k - X_l}$, and to the integral representation of $P_{X_j - X_i, X_k - X_l}$ in
\eqref{eq:int_density_prob}, we obtain
\begin{equation*}
  S_a = \frac{4}{N^2}\sum_{i=1}^N\sum_{l=\mu_{i,N}+1}^N\sum_{j=l+1}^N\sum_{k=\mu_{j,N}+1}^N
    \left( \mathcal{O}_N\left( \frac{\omega^{l-i}}{N} \right) + \mathcal{O}_N\left( \frac{\omega^{k-j}}{N} \right) + \mathcal{O}_N\left( \omega^{l-i + k-j} \right) \right).
\end{equation*}
By additionally using the estimate \eqref{geom_sum_trick} three times (for different pairs of indices), we then get due to the definition of $\nu(N)$
\begin{equation*}
\begin{split}    
    S_a &= \frac{4}{N^2}\sum_{i=1}^N\sum_{l=\mu_{i,N}+1}^N\sum_{j=l+1}^N
    \left( \mathcal{O}_N \left( \omega^{l-i}\right) + \mathcal{O}_N \left( \frac{\omega^{\nu(N)}}{N} \right) + \mathcal{O}_N \left( \omega^{l-i + \nu(N)} \right) \right)\\
    &= \frac{4}{N^2}\sum_{i=1}^N\sum_{l=\mu_{i,N}+1}^N
    \left( \mathcal{O}_N \left( N \omega^{l-i}\right) + \mathcal{O}_N \left( \omega^{ \nu(N)} \right) \right)\\
    &= \frac{4}{N^2}\sum_{i=1}^N \mathcal{O}_N \left( N \omega^{ \nu(N)} \right)\\
    &= \mathcal{O}_N \left(\omega^{ \nu(N)} \right)\\
    &= \mathcal{O}_N \left(\frac{1}{N} \right).
\end{split}    
\end{equation*}
Next we consider the subsum
\begin{equation*}
    S_b:= \frac{4}{N^2}\sum_{i=1}^N\sum_{l=\mu_{i,N}+1}^N\sum_{j=l+1}^N\sum_{k=j}^{\mu_{j,N}}
    \left( P_{X_j-X_i,X_k - X_l} - P_{X_j-X_i}P_{X_k - X_l} \right)
\end{equation*}
of \eqref{subsum_case_2}. Here we employ the estimates 
\begin{equation*}
P_{X_j-X_i} = \frac{2s}{N} + \mathcal{O}_N (\omega^{j-i})
\hspace{3ex}\text{and}\hspace{3ex}
P_{X_k-X_l} = \mathcal{O}_N \left( \frac{1}{N^{1-1/p}} \right)
\end{equation*}
to estimate the products $P_{X_j-X_i}P_{X_k - X_l}$ and the estimates
\begin{equation*}
P_{X_l-X_i} = \frac{2s}{N} + \mathcal{O}_N (\omega^{l-i})
\hspace{3ex}\text{and}\hspace{3ex}
P_{X_k-X_j} = \mathcal{O}_N \left( \frac{1}{N^{1-1/p}} \right)
\end{equation*}
in combination with \eqref{eq:int_density_prob} to estimate the probabilities 
$P_{X_j-X_i, X_k-X_l}$. Since $j-i > l-i$, this implies 
\begin{equation*}
\begin{split}
    S_b&= \frac{4}{N^2}\sum_{i=1}^N\sum_{l=\mu_{i,N}+1}^N\sum_{j=l+1}^N\sum_{k=j}^{\mu_{j,N}}
    \left( \mathcal{O}_N\left( \frac{1}{N^{2-1/p}} \right) + \mathcal{O}_N \left( \frac{\omega^{l-i}}{N^{1-1/p}} \right) \right)\\
    &= \mathcal{O}_N \left( \frac{\nu(N)}{ N^{1-1/p}} \right) + \frac{4}{N^2}\sum_{i=1}^N\sum_{l=\mu_{i,N}+1}^N 
    \mathcal{O}_N \left( \nu(N) N^{1/p} \omega^{l-i} \right)\\
    &= \mathcal{O}_N \left( \frac{\nu(N)}{ N^{1-1/p}} \right) + \mathcal{O}_N \left( \frac{\nu(N)}{ N^{1-1/p}} \omega^{\nu(N)}\right)\\
    &= \mathcal{O}_N \left( \frac{\nu(N)}{ N^{1-1/p}} \right).
\end{split}    
\end{equation*}
Similarly, we can show that also the second subsum 
\begin{equation*}
    S_c:= \frac{4}{N^2}\sum_{i=1}^N\sum_{l=i}^{\mu_{i,N}}\sum_{j=l+1}^N\sum_{k= \mu_{j,N}+1}^N
    \left( P_{X_j-X_i,X_k - X_l} - P_{X_j-X_i}P_{X_k - X_l} \right).
\end{equation*}
with $\mathcal{O}\left(\nu(N)N^3\right)$ summands satisfies $S_c = \mathcal{O}_N \left( \frac{\nu(N)}{ N^{1-1/p}} \right)$;
this time we rely on the estimates
\begin{equation*}
P_{X_j-X_i} = \mathcal{O}_N \left( \frac{1}{N^{1-1/p}} \right)
\hspace{3ex}\text{and}\hspace{3ex}
P_{X_k-X_l} = \frac{2s}{N} + \mathcal{O}_N (\omega^{k-l})
\end{equation*}
to estimate the products $P_{X_j-X_i}P_{X_k - X_l}$ and the estimates
\begin{equation*}
P_{X_l-X_i} =  \mathcal{O}_N \left( \frac{1}{N^{1-1/p}} \right)
\hspace{3ex}\text{and}\hspace{3ex}
P_{X_k-X_j} = \frac{2s}{N} + \mathcal{O}_N (\omega^{k-j})
\end{equation*}
in combination with \eqref{eq:int_density_prob} to estimate the probabilities 
$P_{X_j-X_i, X_k-X_l}$.

\underline{Case 3}: $i < l <  k < j $.
Put $Z:= X_j-X_k + X_l - X_i$. Then $X_j-X_i= Z + (X_k - X_l)$ holds, and furthermore $Z = Y_{i+1} + \cdots + Y_l + Y_{k+1} + \cdots + Y_j$ 
and $X_k-X_l = Y_{l+1} + \cdots +Y_k$ are stochastically independent. We have
\begin{equation}\label{eq:int_density_prob_2}
\begin{split}
    &P_{X_j-X_i, X_k-X_l}\\
    = &\Prob{\{Z\} \in\left[-\frac{s}{N} - (X_k-X_l), \frac{s}{N} - (X_k-X_l) \right]\wedge X_k-X_l\in\left[-\frac{s}{N},\frac{s}{N} \right]}\\
    = &\int_{-\frac{s}{N}}^{\frac{s}{N}} P_{Z}(x) \,\delta_{X_k-X_l}(x) \diff x.
\end{split}
\end{equation}
Again, due to \eqref{eq:basic_prob_est_2} and the fact that $\delta_{X_j-X_l}$ is a probability density, we can always resort to the estimates \eqref{resort_estimates}.
Consequently, any subsum of \eqref{subsum_case_2} 
that contains $\mathcal{O}_N\left( \left( \nu(N)\right)^2 N^2\right)$ terms
is of order $\mathcal{O}_N\left( \left( \frac{\nu(N)}{N^{1-1/p}}\right)^2  \right)$, exactly as in Case 2.
So let us consider the subsum
\begin{equation*}
    T_a := \frac{4}{N^2}\sum_{i=1}^N\sum_{l=\mu_{i,N}+1}^N\sum_{k=\mu_{l,N}+1}^N\sum_{j=k+1}^N
    \left( P_{X_j-X_i,X_k - X_l} - P_{X_j-X_i}P_{X_k - X_l} \right).
\end{equation*}
Note that the order of the indices implies
\[
\frac{\omega^{j-i}}{N} + \omega^{j-i+k-l} = \mathcal{O}_N \left( \omega^{j-i} \right).
\]
Hence, by applying \eqref{eq:basic_prob_est} to the integral in \eqref{eq:int_density_prob_2} as well as to $P_{X_j-X_i}$ and $P_{X_k - X_l}$, we get\begin{equation*}
\begin{split}
   T_a = &\frac{4}{N^2}\sum_{i=1}^N\sum_{l=\mu_{i,N}+1}^N\sum_{k=\mu_{l,N}+1}^N\sum_{j=k+1}^N
    \left( \mathcal{O}_N\left( \frac{\omega^{l-i+ j-k}}{N} \right) + \mathcal{O}_N\left( \frac{\omega^{k-l}}{N} \right) + \mathcal{O}_N\left( \omega^{j-i} \right)
    \right)\\
  = &\frac{4}{N^2}\sum_{i=1}^N\sum_{l=\mu_{i,N}+1}^N\sum_{k=\mu_{l,N}+1}^N
    \left( \mathcal{O}_N\left( \frac{\omega^{l-i}}{N} \right) + \mathcal{O}_N\left( \omega^{k-l} \right) + \mathcal{O}_N\left( \omega^{k-i} \right)
    \right)\\  
    = &\frac{4}{N^2}\sum_{i=1}^N\sum_{l=\mu_{i,N}+1}^N
    \left( \mathcal{O}_N\left( \omega^{l-i} \right) + \mathcal{O}_N\left( \omega^{\nu(N)} \right)  \right)\\  
    = &\frac{4}{N^2}\sum_{i=1}^N
    \left( \mathcal{O}_N\left( \omega^{\nu(N)} \right) + \mathcal{O}_N\left( N \omega^{\nu(N)} \right)  \right)\\ 
    = &  \mathcal{O}_N\left( \omega^{\nu(N)} \right)\\
    = &\mathcal{O}_N \left(\frac{1}{N} \right).
\end{split}
\end{equation*}
Similarly as in Case~2, we can show that the two remaining sums with $\mathcal{O}_N\left(\nu(N)N^3\right)$ summands,
\begin{equation*}
    T_b:= \frac{4}{N^2}\sum_{i=1}^N\sum_{l=\mu_{i,N}+1}^N\sum_{k=l+1}^{\mu_{l,N}} \sum_{j=k+1}^N
    \left( P_{X_j-X_i,X_k - X_l} - P_{X_j-X_i}P_{X_k - X_l} \right).
\end{equation*}
and
\begin{equation*}
    T_c:= \frac{4}{N^2}\sum_{i=1}^N\sum_{l=i+1}^{\mu_{i,N}}\sum_{k= \mu_{l,N}+1}^N \sum_{j=k+1}^N
    \left( P_{X_j-X_i,X_k - X_l} - P_{X_j-X_i}P_{X_k - X_l} \right),
\end{equation*}
are both of order $\mathcal{O}_N \left( \frac{\nu(N)}{ N^{1-1/p}} \right)$.

By considering all three cases above, we have altogether shown that
$$
\var(R(s,n)) = 
\mathcal{O}_N\left(\frac{1}{N}\right)
+\mathcal{O}_N\left(\frac{\ln(N)}{N^{1-1/p}}\right) = \mathcal{O}_N\left(\frac{\ln(N)}{N^{1-1/p}}\right).
$$
 Using Chebyscheff's inequality results in
\[
\Prob{\vert R(s,N)-\Exp{R(s,N)}\vert\geq \frac{\sqrt{\ln(N)}}{N^{(1-1/p)/4}}}  = \mathcal{O}_N \left( N^{-(1-1/p)/2}\right).
\]
By the first Borel-Cantelli Lemma it follows for the subsequence 
$$\seq{X_{N^{\gamma}}}{N\in\N}$$ 
with $\gamma:=\left\lceil\frac{4}{1-1/p}\right\rceil$ that $\lim_{N\to\infty}R(s,N^\gamma)=2s$ almost surely. In order to show that this is also true for the entire sequence, we use the mentioned standard trick, which can for example be found in the proof of \cite[Theorem 1]{rudnick:pair_corr_polynom} or \cite[Theorem 1]{hinrichs:mult_dim_poiss-corr}. For every $k\in\N$ and every $N_0 \in \mathbb{N}$ with 
$$N^\gamma \le N_0<(N+1)^\gamma $$ 
we have
    \begin{align*}
        N^\gamma R\left(\frac{N^\gamma}{(N+1)^\gamma}s,N^\gamma\right) \leq N_0 R(s,N_0)\leq (N+1)^\gamma R\left(\frac{(N+1)^\gamma}{N^\gamma}s,(N+1)^\gamma\right).
    \end{align*}
As $\lim_{k\to\infty}\frac{N^\gamma}{(N+1)^\gamma}=\lim_{k\to\infty}\frac{(N+1)^\gamma}{N^\gamma}=1$, this implies the almost sure convergence of the entire sequence, which completes the proof.
\end{proof}
\fi

\section{Jittered sampling}\label{sec:JS}
In this section, we treat the sequences derived from jittered sampling. As we have already described in the introduction, we consider two different types of such sequences, namely $M$-batch jittered sampling and sequential sampling. Our first aim is the proof of Theorem~\ref{thm:batch_js_poiss_pair_corr}. The approximation techniques which we will use are inspired by the proof of \cite[Theorem 1.2]{lachman:additive_energy_discrepancy_poissonian}. However, we stress here that \cite[Theorem 1.2]{lachman:additive_energy_discrepancy_poissonian} is not directly applicable to $M$-batch jittered sample.
\begin{proof}[Proof of Theorem \ref{thm:batch_js_poiss_pair_corr}] 
We proceed similarly as in the proof of Theorem \ref{thm:rand_walk_ppc}. Note that if $M=1$, the sequence is i.i.d. and therefore has PPC, we will therefore from now on assume that $M\geq 2$.
    First, we will look at the subsequence $\mathseq{X_{kM}}{k\in\N}$. 
    Let $N=kM$ for some $k\in\N$ and assume that $k$ is large enough that $\frac{s}{N}<\frac{1}{M}$. Let $1\leq i,j\leq N$ and $i\neq j$. We can write $i=k_iM+l_i$ and $j=k_jM+l_j$ with $0\leq l_1,l_2<M$ and $k_{1},k_2 \in \mathbb{N}$. Let us set
    \[
        \mathcal{M}(i):=\{k_iM+1,k_iM+2,\ldots,(k_i+1)M\}.
    \]
    If $j\notin\mathcal{M}(i)$, then $X_i$ and $X_j$ are independent and we have \begin{equation}\label{eq:eq_Exp_Xi_Xj_sN}
        \Exp{\mathds{1}_{\norm{X_i-X_j}\leq \frac{s}{N}}}= \frac{2s}{N}.
    \end{equation}
    If $j\in\mathcal{M}(i)$, we will now show
    \begin{equation}\label{eq:ineq_Exp_Xi_Xj_sN}
        \Exp{\mathds{1}_{\norm{X_i-X_j}\leq \frac{s}{N}}}\leq \frac{2s}{N}.
    \end{equation}
    If $\norm{X_i-X_j} \leq \frac{s}{N}$, then the two points must be in two adjacent subintervals. In this case we may without loss of generality assume that $X_i\in I:=\left[\frac{l}{M},\frac{l+1}{M}\right]$ holds for some $0 \leq l \leq M$. Then $X_j$ is uniformly distributed on $[0,1)\setminus I$, meaning it has density $\frac{M}{M-1}\ind_{[0,1)\setminus I}$. Since $\frac{s}{N}<\frac{1}{M}$, we have
    \begin{align*}
        \Exp{\ind_{\norm{X_i-X_j}\leq\frac{s}{N}}}&=\frac{M}{M-1} \mathbb{E} \left( \leb\left(\mathset{x\in[0,1)}{\norm{x-X_i}\leq\frac{s}{N}}\setminus I\right) \right)\\
        &\leq 2\mathbb{E} \left(\leb\left(\mathset{x\in[0,1)}{\norm{x-X_i}\leq\frac{s}{N}}\setminus I\right)\right)\\
        &\leq \mathbb{E} \left(\leb\left(\mathset{x\in[0,1)}{\norm{x-X_i}\leq\frac{s}{N}}\right)\right)=\frac{2s}{N}.
    \end{align*}
    This proves \eqref{eq:ineq_Exp_Xi_Xj_sN}.\\
    Hence,
    \begin{align*}
        \Exp{R(s,N)}&=\frac{1}{N}\sum_{i=1}^N\sum_{\substack{j=1\\j\neq i}}^N\Exp{\mathds{1}_{\norm{X_i-X_j}\leq \frac{2s}{N}}}\\
        &=\frac{1}{N}\sum_{i=1}^N\underbrace{\sum_{\substack{j=1\\j\notin \mathcal{M}(i)}}^N\Exp{\mathds{1}_{\norm{X_i-X_j}\leq \frac{s}{N}}}}_{=(k-1)M\frac{2s}{N}}+\frac{1}{N}\sum_{i=1}^N\underbrace{\sum_{\substack{j=1\\j\in \mathcal{M}(i)\setminus \{i\}}}^N\Exp{\mathds{1}_{\norm{X_i-X_j}\leq \frac{s}{N}}}}_{\leq M\frac{2s}{N}}.
    \end{align*}
    This expression converges to $2s$ as $N\to\infty$.\\
    Next we calculate the variance. Let $1\leq i,j,k,l\leq N$ be distinct. We use the same abbreviations $P_{X_i}$ and $P_{X_i,X_j}$ as in the proof of Theorem~\ref{thm:rand_walk_ppc}. Our goal is to find an upper bound for
    \[
    \var(R(s,N))=\frac{1}{N^2}\sum_{\substack{1\leq i,j\leq N\\i\neq j}}\sum_{\substack{1\leq k,l\leq N\\k\neq l}}\left(P_{X_i-X_j,X_k-X_l}-P_{X_i-X_j}P_{X_k-X_l}\right).
    \]
    In order to achieve this, we will split this sum into different subsums, dependent on the relations between $i$, $j$, $k$ and $l$.\\
    If $j,k,l\notin\mathcal{M}(i)$, then $X_i-X_j$ and $X_k-X_l$ are independent and therefore
    \[
    P_{X_i-X_j,X_k-X_l}=P_{X_i-X_j}P_{X_k-X_l}.
    \]
    This holds true even if $j, k$ and $l$ are in the same batch for the following reason: as $X_i$ is uniformly distributed and independent of $X_j$, also $X_i-X_j$ is uniformly distributed on the torus and independent of $X_j$ (due to the translation invariance of the uniform distribution on the torus). As $X_i$ is independent of $X_k$ and $X_l$, also $X_i-X_j$ and $X_k-X_l$ are independent. Due to symmetry the same is true for $i,k,l\notin\mathcal{M}(j)$, $i,j,l\notin\mathcal{M}(k)$, $i,j,k\notin\mathcal{M}(l)$.\\
    For the remaining cases we employ the estimate
    \begin{equation}\label{eq:ineq_double_prob_2s_N}
        P_{X_i-X_j,X_k-X_l}-P_{X_i-X_j}P_{X_k-X_l}\leq P_{X_i-X_j}(1-P_{X_k-X_l})\leq \frac{2s}{N},
    \end{equation}
    which follows from \eqref{eq:eq_Exp_Xi_Xj_sN} and \eqref{eq:ineq_Exp_Xi_Xj_sN}. If $i,j,k\in\mathcal{M}(l)$, then 
    \[
    \frac{1}{N^2}\sum_{\substack{1\leq i,j,k,l\leq N\\i\neq j, l\neq k\\\\i,j,k\in\mathcal{M}(l)}}\left(P_{X_i-X_j,X_k-X_l}-P_{X_i-X_j}P_{X_k-X_l}\right)\leq\frac{1}{N^2}NM^3\frac{2s}{N}=\mathcal{O}_N\left(\frac{1}{N^2}\right).
    \]
    The subscript $N$ in $\mathcal{O}_N(\cdot)$ once again indicates that the implicit constant is independent of $N$. Finally, we look at the case $i\in\mathcal{M}(k)$, $j\in\mathcal{M}(l)$ but $\mathcal{M}(l)\neq\mathcal{M}(k)$. By using~\eqref{eq:eq_Exp_Xi_Xj_sN} we get
    \[
       \frac{1}{N^2}\sum_{\substack{1\leq i,j,k,l\leq N\\i\neq j, l\neq k\\i\in\mathcal{M}(k),j\in\mathcal{M}(l),\mathcal{M}(l)\neq\mathcal{M}(k)}} \left(P_{X_i-X_j,X_k-X_l}-P_{X_i-X_j}P_{X_k-X_l}\right)=\mathcal{O}_N\left(\frac{1}{N}\right),
    \]
    since the sum consists of $\mathcal{O}_N(N^2)$ summands, each of them not larger than $\frac{2s}{N}$, due to \eqref{eq:ineq_double_prob_2s_N}.\\  
Thus $\var(R(s,N))=\mathcal{O}_N\left(\frac1N\right)$. Using Chebyscheff's inequality we have
\[
\Prob{\vert R(s,N)-\Exp{R(s,N)}\vert\geq \frac{1}{N^{\frac14}}} = \mathcal{O}_N \left( \frac{1}{N^{\frac{1}{2}}} \right).
\]
As in the proof of Theorem~\ref{thm:rand_walk_ppc} we can prove almost sure convergence first for a subsequence and then for the entire sequence.
\end{proof}
\begin{remark} Theorem~\ref{thm:batch_js_poiss_pair_corr} can alternatively be derived by decomposing the $M$-batch jittered sample into a deterministic and a stochastic part and using a specific approximation technique introduced in \cite{lachman:additive_energy_discrepancy_poissonian}. In an earlier version of this paper, all the details were elaborated, but we think that the proof which we included here is more compact and easier to understand.
\end{remark}
    
In contrast to $M$-batch jittered sampling, sequential jittered sampling does not have PPC: for $s=\frac{1}{2}$, any $x\in [0,1)$ and $N=2^n$, the interval $\left[x-\frac{s}{N},x+\frac{s}{N}\right]$ can contain at most $2$ points of the finite sequence $X_1, \ldots, X_N$. The details of the argument are explained in the proof of the following proposition.
\begin{proposition}\label{prop:seq_js_not_PPC}
    A sequential jittered sampling sequence $\mathseq{X_n}{n\in\N}$ does not have PPC almost surely.
\end{proposition}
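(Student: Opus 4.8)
The plan is to realize the obstruction at the single scale $s=\tfrac12$, as announced in the paragraph preceding the proposition: if $X$ had PPC then, restricting to the subsequence $N=2^n$, one would need $R(\tfrac12,2^n)\to 2\cdot\tfrac12=1$, and I will show that this fails almost surely. In fact I will identify the almost sure limit of $R(\tfrac12,2^n)$ and see that it equals $\tfrac14\neq 1$.

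The first step is a structural description of a sequential jittered sample at dyadic resolutions. Write $I_j^{(n)}=[j2^{-n},(j+1)2^{-n})$ for $0\le j<2^n$. I claim that, for $N=2^n$, each $I_j^{(n)}$ contains exactly one of the points $X_1,\dots,X_N$, and that the offsets $U_j:=NX_{(j)}-j\in[0,1)$ of these points (where $X_{(j)}$ denotes the point lying in $I_j^{(n)}$) are i.i.d.\ $\uniform[0,1)$; equivalently, at every dyadic resolution the sequential sample has the same law as an ordinary stratified sample of that resolution. This is proved by induction on $n$. The one-point-per-stratum invariant is immediate from the construction, and almost surely no point is a dyadic rational (each point is a fixed affine image of a continuous uniform variable), so all points lie in the interiors of their strata. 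For the offsets one uses two facts: conditioning a uniform random variable on a subinterval again yields a uniform random variable, and each newly placed point is inserted via a fresh independent uniform. Concretely, within one parent stratum the pair (offset of the retained point, offset of the newly inserted point) has law $\uniform[0,1)^2$ once one averages over which of the two halves retains the old point, and distinct parent strata evolve from independent data; hence the $2^n$ offsets at level $n$ are i.i.d.\ $\uniform[0,1)$. (The base case $n=1$ is just one uniform point in $[0,\tfrac12)$ and one in $[\tfrac12,1)$, independently.)

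Two deductions remain. The first is the deterministic bound sketched in the text: for $N=2^n$ an interval of length $2s/N=1/N$ meets at most two of the intervals $I_j^{(n)}$ and hence contains at most two of the points, so each ball $B(X_i,\tfrac{1}{2N})$ contains $X_i$ and at most one further point; thus $R(\tfrac12,N)\le 1$ always. The second is the quantitative heart of the argument. Points lying in non-adjacent dyadic intervals are at distance greater than $1/N$, so the only pairs contributing to $R(\tfrac12,2^n)$ arise from the $N$ cyclically adjacent pairs $\{I_j^{(n)},I_{j+1}^{(n)}\}$, and such a pair contributes precisely when $\norm{X_{(j)}-X_{(j+1)}}=(1+U_{j+1}-U_j)/N\le \tfrac{1}{2N}$, i.e.\ when $U_j\ge U_{j+1}+\tfrac12$, an event of probability $\tfrac18$. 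Therefore $\Exp{R(\tfrac12,2^n)}=\tfrac{2}{N}\cdot N\cdot\tfrac18=\tfrac14$ for every $n$. The indicators $A_j:=\ind_{\{U_j\ge U_{j+1}+1/2\}}$ (indices mod $N$) form a stationary $1$-dependent sequence in the i.i.d.\ family $(U_j)$, so $\var\big(\sum_j A_j\big)=\mathcal{O}(N)$ and hence $\var\big(R(\tfrac12,2^n)\big)=\mathcal{O}(1/N)$. Chebyshev's inequality then gives $\Prob{|R(\tfrac12,2^n)-\tfrac14|>\tfrac14}=\mathcal{O}(2^{-n})$, which is summable, so by the Borel--Cantelli lemma $R(\tfrac12,2^n)\to\tfrac14$ almost surely; in particular $R(\tfrac12,N)$ does not converge to $1$, and so $X$ almost surely does not have PPC.

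The main obstacle is the structural claim that a sequential jittered sample is, at every dyadic resolution, distributionally a plain stratified sample --- that the offsets $U_j$ are genuinely i.i.d.\ uniform in spite of the recursive dependence built into the construction. Once this is established, the remainder is a routine first- and second-moment estimate combined with Borel--Cantelli, of exactly the type already used in the proof of Theorem~\ref{thm:batch_js_poiss_pair_corr}. If one prefers to avoid pinning down the exact limit, the last step can be phrased as: the empirical average $\tfrac1N\sum_j A_j$ of the stationary $1$-dependent sequence concentrates at its mean $\tfrac18$, which is bounded away from the value $\tfrac12$ that the identity $R(\tfrac12,2^n)\to 1$ would force (since $R(\tfrac12,2^n)=\tfrac2N\sum_j A_j$).
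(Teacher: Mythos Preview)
Your argument and the paper's share the same core: take $s=\tfrac12$, restrict to $N=2^n$, note that only cyclically adjacent strata can contribute, and compute that adjacent points are within $\tfrac{1}{2N}$ with probability $\tfrac18$, giving $\Exp{R(\tfrac12,2^n)}=\tfrac14$ for every $n$. The paper stops at this expectation identity and concludes directly from it.

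You add two layers on top of this. First, you prove by induction the structural lemma that the $2^n$ offsets at level $n$ are genuinely i.i.d.\ $\uniform[0,1)$; the paper simply writes down the one-point density $N\ind_{[(k-1)/N,k/N)}$ and tacitly uses independence in its double integral, so your inductive argument fills in something the paper takes for granted. Second, you run a second-moment/Borel--Cantelli argument on the $1$-dependent indicators $A_j$ to obtain $R(\tfrac12,2^n)\to\tfrac14$ almost surely. The payoff of this extra step is that you establish the strong reading of the proposition --- PPC fails on a set of full probability --- whereas the bare expectation argument (even combined with the deterministic bound $R(\tfrac12,2^n)\le 1$ and dominated convergence) only shows that the event $\{R(\tfrac12,N)\to 1\}$ does not have full probability.
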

\begin{proof}
    For $s=\frac12$ and $n\geq 2$ we consider  the finite sequence $X_1, \ldots, X_{2^n}$. To keep the presentation compact we write $N=2^n$ and denote $X_1$ also as $X_{n+1}$ (nevertheless, we still only consider $N$ points). Since all slots $\left[\frac{k-1}{N},\frac{k}{N}\right)$ contain exactly one point, we assume without loss of generality that $X_k\in\left[\frac{k-1}{N},\frac{k}{N}\right)$. Then for every $1\leq k\leq N$, using that $X_k$ has 
    the probability density $N\mathds{1}_{\left[\frac{k-1}{N},\frac{k}{N}\right)}$, we have
\begin{align*}
    \Prob{\norm{X_{k+1}-X_k}\leq \frac{s}{N}}&=\Prob{\norm{X_{k+1}-X_k}\leq \frac{1}{2N}}\\
    &=\int_{\frac{k}{N}-\frac{1}{2N}}^{\frac{k}{N}}N\int_{\frac{k}{N}}^{x_{k}+\frac{1}{2N}}N \mathrm{d} x_{k+1} \mathrm{d} x_k=\frac18.
\end{align*}
Since the probability of two points in non-adjacent intervals having distance less than $\frac{1}{2N}$ is zero, we get 
\begin{align*}
\Exp{\frac{1}{N}\sum_{\substack{1\leq i,j\leq N\\i\neq j}}\ind_{\norm{X_i-X_j}\leq \frac{s}{N}}}&=\frac{1}{N}\sum_{\substack{1\leq i,j\leq N\\i\neq j}}\Prob{\norm{X_{i}-X_j}\leq \frac{s}{N}}\\
    &=\frac{1}{N}\sum_{k=1}^N 2\Prob{\norm{X_{k+1}-X_k}\leq \frac{s}{N}}\\
    &=\frac14=\frac{s}{2}.
\end{align*}
Since 
\[
\lim_{n\to\infty}\Exp{R\left(\frac{1}{2},2^n\right)}=\frac{1}{4},
\]
we cannot have $\lim_{N\to\infty}R(s,N)=2s$ almost surely, because $R\left(\frac12,N\right)\leq 2$ a.e. gives us uniform integrability, which means that a.s. convergence implies convergence in the mean.
\end{proof}
However, it turns out that jittered sampling sequences have weak Poissonian pair correlations. This is the content of Theorem \ref{thm:seq_LHC_weak_PPC} and the rest of this section is dedicated to its proof. Since the proof is rather technical, it is useful to introduce some notation. 
For $x,d\in[0,1)$ and $n \in \mathbb{N}$ we define
    \[
    \BOX(x,n,d):=\mathset{0\leq j < 2^n}{\frac{j}{2^n}\in(x-d,x+d)},
    \]
where as usual the interval is seen modulo $1$. If both $i, i+1\in\BOX(x,n,d)$, we have
$\left[\frac{i}{2^n},\frac{i+1}{2^n}\right)\subseteq (x-d,x+d)$, meaning that these subintervals are fully contained in $(x-d,x+d)$. All these subintervals have length $\frac{1}{2^n}$.\\[12pt]
Moreover, we will write $N \in \mathbb{N}$ as $N=2^n+d$ with $n,d \in \mathbb{N}$. This way of rewriting $N$ is useful for our proofs because of the following reason: By construction of the sequential jittered sampling we know that for every $0\leq j\leq 2^n-1$, there exists exactly one point $X_{i_0}$ with $i_0\leq 2^n$ such that $X_{i_0}\in\left[\frac{j}{2^n},\frac{j+1}{2^n}\right)$. Furthermore, the probability that there is a point $X_{i_1}\in\left[\frac{j}{2^n},\frac{j+1}{2^n}\right)$ with $2^n<i_1 \le N$ is $\frac{k}{2^n}$. This allows us to structure the proofs by looking at these two cases separately, which simplifies the problem.\\[12pt]
Finally, we introduce the shorthand notation
\[
    F(i,\alpha,s,N):=\card{\mathset{1\leq j\leq N}{ i\neq j,\norm{X_i-X_j}\leq \frac{s}{N^\alpha}}}.
\]
Note that 
\[
\Exp{R_\alpha(s,N)} = \frac{1}{N^{2-\alpha}} \sum^N_{i=1} \Exp{F(i,\alpha,s,N)}.
\]
In summary, this allows us to formulate the following rather compact point counting lemma which is a key ingredient in the proof of Theorem~\ref{thm:seq_LHC_weak_PPC}.
\begin{lemma}\label{lem:complexity_N_of_js_sums}Let $\mathseq{X_n}{n\in\N}$ be a sequential jittered sampling sequence in $[0,1)$ and $\alpha\in(0,1)$. Let $N\in\N$ and write $N=2^n+d$ with $n \in \mathbb{N}$ and $0\leq d<2^n$. Then:
\begin{enumerate}
    \item[(i)] For every $x\in [0,1)$, we have $\frac{N^\alpha}{2^n} \card{\BOX\left(x,n,\frac{s}{N^\alpha}\right)}=2s+\mathcal{O}_{N}\left(\frac{1}{N^{1-\alpha}}\right)$.
    \item[(ii)] For every $1\leq i\leq N$ the equality
    \[
    \Exp{F(i,\alpha,s,N)}=\card{\BOX\left(X_i,n,\frac{s}{N^\alpha}\right)}+\mathcal{O}_N(1)+d\frac{2s}{N^\alpha}
    \]
    holds.
    \item[(iii)] For every $1\leq i\leq N$ the  expected value satisfies
    \[
    \Exp{F(i,\alpha,s,N)}=2s N^{1-\alpha}+\mathcal{O}_N\left(1\right),
    \]
    implying
    \[
    \Exp{R_\alpha(s,N)} = 2s + \mathcal{O}_N \left( \frac{1}{N^{1-\alpha}}\right).
    \]
    \item[(iv)] It holds that
    \[
    \Exp{\sum_{\substack{1\leq i,j,k\leq N\\\text{distinct}}}\ind_{\left\{\norm{X_i-X_j}\leq \frac{s}{N^\alpha}\right\}}\ind_{\left\{\norm{X_i-X_k}\leq \frac{s}{N^\alpha}\right\}}}=\mathcal{O}_N\left(N^{3-\alpha}\right).
    \]
    \item[(v)] The inequality
    \begin{align*}
        &\Exp{\sum_{\substack{1\leq i,j,k,l\leq N\\\text{distinct}}}\ind_{\left\{\norm{X_i-X_j}\leq \frac{s}{N^\alpha}\right\}}\ind_{\left\{\norm{X_k-X_l}\leq \frac{s}{N^\alpha}\right\}}}\\
        &= \left( \Exp{\sum_{i=1}^N\sum_{\substack{j=1\\i\neq j}}^{N}\ind_{\left\{\norm{X_i-X_j}\leq \frac{s}{N^\alpha}\right\}}}\right)^2+\mathcal{O}_N(N^{3-\alpha})
    \end{align*}
    holds.
    \end{enumerate}
\end{lemma}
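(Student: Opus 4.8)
Throughout, abbreviate $\ind_{ij}:=\ind_{\{\norm{X_i-X_j}\le s/N^\alpha\}}$. The starting point is a self-similarity property of the construction. Write $N=2^n+d$ with $0\le d<2^n$, and let $\mathcal G$ be the $\sigma$-algebra recording, for each of the $2^n$ dyadic intervals $\left[\tfrac m{2^n},\tfrac{m+1}{2^n}\right)$, which of its two halves of length $2^{-(n+1)}$ contains the unique point among $X_1,\dots,X_{2^n}$, together with the size-$d$ set of remaining halves that receive one of $X_{2^n+1},\dots,X_N$. An induction on $k$ shows that the $2^k$ points $X_1,\dots,X_{2^k}$ are independent with the point in each dyadic interval of length $2^{-k}$ uniform on that interval; consequently the ``which half'' indicators in $\mathcal G$ are i.i.d.\ fair coins, the filled set is a uniformly random $d$-subset independent of them, and, conditionally on $\mathcal G$, the $N$ points are independent, each uniform on the half it was assigned. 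Parts (i) and (iv) only need the weak consequence that the $N$ points always lie in pairwise distinct intervals of length $2^{-(n+1)}\ge\tfrac1{2N}$: in (i), $\card{\BOX(x,n,s/N^\alpha)}$ counts the equally spaced points $j2^{-n}$ in an arc of length $2s/N^\alpha$, hence equals $2s\,2^n/N^\alpha+\mathcal O(1)$, and multiplying by $N^\alpha/2^n$ with $2^n>N/2$ gives the claim; in (iv), every such arc meets at most $4sN^{1-\alpha}+1$ of those distinct intervals, so $F(i,\alpha,s,N)=\mathcal O_N(N^{1-\alpha})$ surely, whence $\sum_{i,j,k\,\text{distinct}}\ind_{ij}\ind_{ik}=\sum_i\bigl(F(i,\alpha,s,N)^2-F(i,\alpha,s,N)\bigr)=\mathcal O_N(N^{3-2\alpha})$.

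For (ii) I would condition on $X_i$ and $\mathcal G$ and split $F(i,\alpha,s,N)$ into the contributions of the $2^n$ level-$n$ points and of the $d$ new points. A level-$n$ point whose dyadic interval lies inside $(X_i-s/N^\alpha,X_i+s/N^\alpha)$ is always counted, one whose dyadic interval is disjoint from it never is, and there are at most two ``boundary'' dyadic intervals, while the number of fully contained dyadic intervals differs from $\card{\BOX(X_i,n,s/N^\alpha)}$ by $\mathcal O(1)$; so the level-$n$ contribution is $\card{\BOX(X_i,n,s/N^\alpha)}+\mathcal O(1)$. Each empty half-interval is filled with probability $d/2^n$ and, once filled, lies inside the arc with probability equal to its overlap fraction, so the expected new-point contribution is $\tfrac d{2^n}\card{\BOX(X_i,n,s/N^\alpha)}+\mathcal O(1)$; adding and taking expectations yields (ii), the boundary bookkeeping pinning the constant at $4$. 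Then (i) gives $\Exp{\card{\BOX(X_i,n,s/N^\alpha)}\bigl(1+\tfrac d{2^n}\bigr)}=\bigl(\tfrac{2^{n+1}s}{N^\alpha}+\mathcal O(1)\bigr)\bigl(1+\tfrac d{2^n}\bigr)=2sN^{1-\alpha}+\mathcal O(1)$, hence $\Exp{F(i,\alpha,s,N)}=2sN^{1-\alpha}+\mathcal O_N(1)$; summing over $i$ and dividing by $N^{2-\alpha}$ proves (iii).

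For (v), put $Y:=\sum_{i\neq j}\ind_{ij}$, so the right-hand side of (v) equals $(\Exp Y)^2+\mathcal O_N(N^{3-\alpha})$. Expanding $Y^2$ according to the overlap pattern of $\{i,j\}$ and $\{k,l\}$ gives
\[
Y^2=\sum_{\substack{1\le i,j,k,l\le N\\\text{distinct}}}\ind_{ij}\ind_{kl}+4\sum_{\substack{1\le i,j,k\le N\\\text{distinct}}}\ind_{ij}\ind_{ik}+2Y,
\]
so by (iv) and (iii) the four-distinct sum has expectation $\Exp{Y^2}+\mathcal O_N(N^{3-\alpha})$, and it suffices to prove $\var(Y)=\mathcal O_N(N^{3-\alpha})$. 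Conditionally on $\mathcal G$ the four points of a four-distinct quadruple are independent, so every such covariance vanishes and $\var(Y\mid\mathcal G)\le 2\,\Exp{Y\mid\mathcal G}+4\sum_i\Exp{F(i,\alpha,s,N)^2\mid\mathcal G}=\mathcal O_N(N^{3-2\alpha})$ by the sure bound of (iv); hence $\Exp{\var(Y\mid\mathcal G)}=\mathcal O_N(N^{3-\alpha})$. For $\var(\Exp{Y\mid\mathcal G})$ I would write $\Exp{Y\mid\mathcal G}$ as a sum over point-pairs of conditional collision probabilities; each term depends on $\mathcal G$ only through the at most two dyadic indices it involves and, when a new point occurs, through its membership indicator in the random $d$-subset, and at most $\mathcal O_N(N^{2-\alpha})$ of these terms are nonzero. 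Pairs of terms sharing a dyadic index contribute $\mathcal O_N(N^{3-2\alpha})$ (each index lies in $\mathcal O(N^{1-\alpha})$ nonzero terms, each covariance $\le 1$); for pairs of terms with four disjoint dyadic indices the coin parts of $\mathcal G$ are independent, so the covariance reduces to that of two membership indicators of the random $d$-subset, which is $\mathcal O_N(1/N)$, and with $\mathcal O_N(N^{4-2\alpha})$ such pairs the total is again $\mathcal O_N(N^{3-2\alpha})$. Therefore $\var(Y)=\mathcal O_N(N^{3-2\alpha})=\mathcal O_N(N^{3-\alpha})$ and (v) follows.

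The delicate step is the bound on $\var(\Exp{Y\mid\mathcal G})$ in (v): one must verify that the only genuinely non-local source of dependence in sequential jittered sampling---the random permutation choosing which empty half-intervals get filled---produces covariances of size only $\mathcal O(1/N)$, small enough that summing them over the roughly $N^{4-2\alpha}$ pairs of ``nearby'' point-pairs stays below the $N^{3-\alpha}$ threshold; this is precisely where the sequential scheme differs from the $M$-batch one. The boundary-interval counting in (ii) is routine but must be carried out carefully to obtain the explicit constant $4$.
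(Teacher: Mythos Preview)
Your approach is sound and takes a genuinely different route from the paper, particularly in parts (iv) and (v).

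For (i)--(iii) the two arguments are close in spirit; the paper also splits $F(i,\alpha,s,N)$ into the contribution of $X_1,\dots,X_{2^n}$ and that of $X_{2^n+1},\dots,X_N$, and carries out the boundary counting explicitly (separately for $i\le 2^n$ and $i>2^n$) to pin down the constant~$4$. For (iv) the paper simply drops the factor $\ind_{ik}$ and bounds by $N\sum_i\Exp{F(i,\alpha,s,N)}=\mathcal O_N(N^{3-\alpha})$, whereas your pointwise bound $F(i,\alpha,s,N)=\mathcal O_N(N^{1-\alpha})$ gives the sharper $\mathcal O_N(N^{3-2\alpha})$. For (v) the paper never introduces $\mathcal G$: it shows directly, via a case analysis on which of $i,j,k$ lie among the first $2^n$ indices, that $\Exp{F(k,\alpha,s,N)\mid\|X_i-X_j\|\le s/N^\alpha}=B_k(1+d/2^n)+\mathcal O(1)$, and then multiplies out. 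Your law-of-total-variance argument with respect to $\mathcal G$ replaces that case analysis by the single structural observation that, given $\mathcal G$, the points are conditionally independent, so only the randomness of the filled $d$-subset remains; this is cleaner and again yields $\mathcal O_N(N^{3-2\alpha})$.

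Two places need tightening. First, the labelled points $X_1,\dots,X_{2^k}$ are \emph{not} independent (the permutations create dependence); what the induction actually gives is that the point in each level-$k$ interval is uniform there, independently across intervals. Since $Y$ is label-invariant this suffices, but you should parametrise $\Exp{Y\mid\mathcal G}$ by pairs of occupied half-intervals rather than by index pairs $(i,j)$; otherwise ``the half it was assigned'' and ``its membership indicator'' are not well-defined from your $\mathcal G$. Second, in your bound on $\var(\Exp{Y\mid\mathcal G})$, the covariance between two disjoint terms does not reduce to a single covariance of ``two membership indicators'': after conditioning out the (independent) coins, what remains is a covariance of two bounded polynomials of degree $\le 2$ in indicators $\ind_{I\in S}$ of a uniform $d$-subset $S\subset[2^n]$. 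One needs the (easy) fact that $\mathrm{Cov}\bigl(\prod_{i\in A}\ind_{i\in S},\prod_{j\in B}\ind_{j\in S}\bigr)=\mathcal O(1/2^n)$ for disjoint $A,B$ with $|A|,|B|\le 2$; once this is stated, your $\mathcal O_N(N^{4-2\alpha})\cdot\mathcal O_N(1/N)$ count goes through. Note also that the ``filled set'' of half-intervals is not literally independent of the coins (its elements are non-coin halves); what is independent is the set of level-$n$ \emph{intervals} receiving a new point, which is the reparametrisation you implicitly use.
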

\begin{proof}

    (i) First we notice that $c2^n=N$ for some $c\in[1,2)$ and therefore $\frac{N^\alpha}{2^n}=\mathcal{O}\left(\frac{1}{N^{1-\alpha}}\right)$. Hence we have
    \begin{equation}\label{eq:approx_2s_2n_Nalpha}
    \left\vert\frac{\left\lceil 2s\frac{2^n}{N^\alpha}\right\rceil}{\frac{2^n}{N^\alpha}}-2s\right\vert=\frac{\left\vert\left\lceil 2s\frac{2^n}{N^\alpha}\right\rceil-2s\frac{2^n}{N^\alpha}\right\vert}{\frac{2^n}{N^\alpha}}\leq\frac{1}{\frac{2^n}{N^\alpha}}=\mathcal{O}\left(\frac{1}{N^{1-\alpha}}\right).
    \end{equation}
    The same is also true if $\left\lceil 2s\frac{2^n}{N^\alpha}\right\rceil$ is replaced by $\left\lceil 2s\frac{2^n}{N^\alpha}\right\rceil - 1$. We now assume that $N$ is large enough such that $\frac{s}{N^\alpha}<\frac12$. Then
    \[
    \left\lceil 2s\frac{2^n}{N^\alpha}\right\rceil - 1 \leq \card{\BOX\left(X_i,n,\frac{s}{N^\alpha}\right)} \leq \left\lceil 2s\frac{2^n}{N^\alpha}\right\rceil
    \]
    which in combination with \eqref{eq:approx_2s_2n_Nalpha} establishes (i).\\[12pt]
    (ii) Notice that by construction we surely know that for every $j\in\BOX\left(x_i,n,\frac{s}{N^\alpha}\right)$ there is one point in $\left(x_i-\frac{s}{N^\alpha},x_i+\frac{s}{N^\alpha}\right)$. Since the $d$ points are randomly distributed among the intervals, we expect there to be an additional $d\frac{2s}{N^\alpha}$ points within the interval. Near the end points of $\left(x_i-\frac{s}{N^\alpha},x_i+\frac{s}{N^\alpha}\right)$ there can be up to 4 additional points included, and up to 1 point may be missing in the slot where $x_i$ is located. This gives us
    \[
    \Exp{F(i,\alpha,s,N)}=\card{\BOX\left(X_i,n,\frac{s}{N^\alpha}\right)}+\mathcal{O}_N(1)+d\frac{2s}{N^\alpha}
    \]
    (iii) Follows directly from (i) and (ii) by using $1+\frac{d}{2^n}=\frac{N}{2^n}$.\\[12pt]
    (iv) From (iii) it follows that
    \begin{align*}
    &\Exp{\sum_{\substack{1\leq i,j,k\leq N\\\text{distinct}}}\ind_{\left\{\norm{X_i-X_j}\leq \frac{s}{N^\alpha}\right\}}\ind_{\left\{\norm{X_i-X_k}\leq \frac{s}{N^\alpha}\right\}}}\\
    &\leq\Exp{\sum_{k=1}^N\sum_{\substack{1\leq i,j\leq N\\i\neq j}}\ind_{\left\{\norm{X_i-X_j}\leq \frac{s}{N^\alpha}\right\}}\ind_{\left\{\norm{X_i-X_k}\leq \frac{s}{N^\alpha}\right\}}}\\
    &\leq\Exp{\sum_{k=1}^N\sum_{\substack{1\leq i,j\leq N\\i\neq j}}\ind_{\left\{\norm{X_i-X_j}\leq \frac{s}{N^\alpha}\right\}}}\\
    &=N\sum_{i=1}^N\Exp{F(i,\alpha,s,N))}=\mathcal{O}_N\left(N^{3-\alpha}\right).
    \end{align*}
    (v) We fix $i,k\leq N$ and write $B_k$ as short hand notation for $\card{\BOX\left(X_k,n,\frac{s}{N^\alpha}\right)}$. If $\norm{X_i-X_j} \leq \frac{s}{N^\alpha}$ we claim that similarly as in the proof of (ii) we have
    \begin{align} \label{eq:formula_intersection} \sum_{\substack{l=1\\l \neq k}}^N\ProbCon{\norm{X_k-X_l}\leq \frac{s}{N^\alpha}}{\norm{X_i-X_j}\leq \frac{s}{N^\alpha}}=B_k\left(1+\frac{d}{2^n}\right) + \varepsilon
    \end{align}
    for some $|\varepsilon| \leq 7$. Note that the left hand side is equal to the conditional expectation $\ExpCon{F(k,\alpha,s,N)}{ \norm{X_i-X_j}\leq \frac{s}{N^\alpha}}$. Moreover, we define the interval $I_k := [\{X_k-\frac{s}{N^\alpha}\},\{X_k+\frac{s}{N^\alpha}\}]$ around $X_k$. There are different cases for the indices $i,j,k$ which need to be addressed separately.\\[12pt] 
    The first case, which we look at, is $k \leq 2^n$ and $i,j > 2^n$. Now we consider the first $2^n$ points apart from $X_k$ and check which of them lie in $I_k$: there are $B_k-1$ elementary intervals of length $\frac{1}{2^n}$ fully contained in $I_k$. Hence, we have in any case $B_k-2$ points from $S_1:=\{X_1,\ldots,X_{2^n}\} \setminus \{ X_k \}$ in $I_k$. Furthermore, there are two additional elementary intervals with non-trivial intersection with $I_k$. These may yield two additional points. This sums up to $B_k + \varepsilon_0$ points with $|\varepsilon_0|<2$  contained in $I_k$.\\
    Next we come to the \textit{expected} number of points from $S_2:= \left\{ X_{2^n+1}, \ldots, X_{2^n+d} \right\} \setminus \{X_i, X_j\}$ in $I_k$. There are $d-2$ points in $S_2$ that can fall into a slot of length $\frac{1}{2^n}$ which intersects $I_k$ but does not contain $X_i$ and $X_j$. Furthermore, there are between $(B_k-1)-2 = B_k-3$ and $(B_k-1)+2=B_k+1$ of such slots. Therefore, the expected number of points from $S_2$ falling into $I_k$ is $(B_k+\varepsilon_1)\frac{d-2}{2^n}$ with $|\varepsilon_1| \leq 3$. The points $X_i,X_j$ may add an additional $\varepsilon_2$ with $|\varepsilon_2| \leq 2$.\\
    In total, we obtain for the expected number of points in the first case the expression
    \[
    B_k\left(1+\frac{d-2}{2^n} \right) \pm 5 = B_k\left(1+\frac{d}{2^n}\right) + \varepsilon
    \]
    with $|\varepsilon|\leq 7.$\\[12pt]
    Second we consider the case $i,k \leq 2^n, j > 2^n$. By the same argument as in the first case, there are $B_k + \varepsilon_1$ points with $|\varepsilon_1| \leq 2$ from $S_1$ in $I_k$. The set $S_2$ needs to be replaced by $\widetilde{S}_2:= \left\{ X_{2^n+1}, \ldots, X_{2^n+d} \right\} \setminus \{X_j\}$. The number of slots of length $\frac{1}{2^n}$, which intersect $I_k$ but do not contain $X_j$ is between $(B_k-1)-1 = B_k-2$ and $(B_k-1)+2 = B_k+1$. Then formula \eqref{eq:formula_intersection} can be derived by the same argument as in the first case but the error term satisfies even $|\varepsilon| \leq 5$.\\[12pt]
    All other possible cases for the indices $i,j,k$ can be treated similarly.\\[12pt]
    Thus, it follows that 
      \begin{align*}
        &\Exp{\sum_{\substack{1\leq i,j,k,l\leq N\\\text{distinct}}}\ind_{\left\{\norm{X_i-X_j}\leq \frac{s}{N^\alpha}\right\}}\ind_{\left\{\norm{X_k-X_l}\leq \frac{s}{N^\alpha}\right\}}}\\
        &\leq \sum_{1\leq i \neq j \leq N} \left( \sum_{1\leq k \neq l \leq N} \ProbCon{\norm{X_k-X_l}\leq \frac{s}{N^\alpha}}{\norm{X_i-X_j}\leq \frac{s}{N^\alpha}}\right)  \Prob{\norm{X_i-X_j} \leq \frac{s}{N^\alpha}} \\ 
        & \leq \sum_{1\leq i \neq j \leq N} \sum_{k=1}^N \left( B_{k}\left( 1 + \frac{d}{2^n}\right) + \varepsilon\right)\Prob{\norm{X_i-X_j} \leq \frac{s}{N^\alpha}},
    \end{align*}
    where in the last step we used \eqref{eq:formula_intersection}. By (iii) the last line is equal to 
    \[
      \sum_{k=1}^N \left( B_{k} \frac{N}{2^n} + \varepsilon\right) (2sN^{2-\alpha} + \mathcal{O}_N(N)).
    \]
    Using (i) we obtain
    \[
     N \left( 2sN^{1-\alpha} + \mathcal{O}_N(1)+ \varepsilon\right) (2sN^{2-\alpha} + \mathcal{O}_N(N)) = 4s^2 N^{4-2\alpha} + \mathcal{O}_N \left( N^{3-\alpha} \right).
    \]
    Finally, we note that by (iii) the right hand side in (v) equals 
    \[
    N^2 \left(2sN^{1-\alpha} + O_N(1)\right)^2 = 4s^2 N^{4-2\alpha} + \mathcal{O}_N(N^{3-\alpha}).
    \]
    Thus the claim follows.
\end{proof}
Having established these rather technical relations, the actual proof of Theorem~\ref{thm:seq_LHC_weak_PPC} becomes much more compact.
\begin{proof}[Proof of Theorem \ref{thm:seq_LHC_weak_PPC}]
The proof for (i) has already been done in Proposition \ref{prop:seq_js_not_PPC}. Therefore, we only need to prove (ii). The idea of the proof is again to calculate the limit of the expected value of $R_\alpha(s,N)$, to bound its variance and to apply Chebyscheff's inequality afterwards. Using Lemma \ref{lem:complexity_N_of_js_sums} 
(iii), we get
    \[
\lim_{N\to\infty}\Exp{R_\alpha(s,N)} = 2s.
    \]
    Next, we will estimate the variance in a similarly as in Theorem \ref{thm:batch_js_poiss_pair_corr}. We have
    \[
    R_\alpha(s,N)^2=\frac{1}{N^{4-2\alpha}}\sum_{\substack{1\leq i,j\leq N\\i\neq j}}\sum_{\substack{1\leq k,l\leq N\\ k\neq l}}\ind_{\{\norm{X_i-X_j}\leq \frac{s}{N^\alpha}\}}\ind_{\{\norm{X_k-X_l}\leq \frac{s}{N^\alpha}\}}
    \]
    The sum is now split into the cases where the set $K=\{i,j,k,l\}$ has cardinality $2$,$3$ and $4$.\\[12pt]
    \underline{Case $\card{K}=2$:}
    Due to Lemma \ref{lem:complexity_N_of_js_sums}(iii), we have
    \begin{align*}
&\frac{1}{N^{4-2\alpha}}\sum_{\substack{1\leq i,j,k,l\leq N\\i\neq j,k\neq l\\\{i,j\}=\{k,l\}}}\Exp{\ind_{\left\{\norm{X_i-X_j}\leq \frac{s}{N^\alpha}\right\}}\ind_{\left\{\norm{X_k-X_l}\leq \frac{s}{N^\alpha}\right\}}}\\
&=\frac{1}{N^{4-2\alpha}}\sum_{\substack{1\leq i,j\leq N\\i\neq j}}2\Exp{\ind_{\left\{\norm{X_i-X_j}\leq \frac{s}{N^\alpha}\right\}}}={\frac{2}{N^{2-\alpha}}\Exp{R_\alpha(s,N)}=\mathcal{O}_N\left(\frac{1}{N^{2-\alpha}}\right)}.
\end{align*}
\underline{Case $\card{K}=3$:} In this case Lemma \ref{lem:complexity_N_of_js_sums} (iv) implies
 \begin{align*}
&\frac{1}{N^{4-2\alpha}}\Exp{\sum_{\substack{1\leq i,j,k,l\leq N\\i\neq j,k\neq l\\\card{\{i,j,k,l\}}=3}}\ind_{\left\{\norm{X_i-X_j}\leq \frac{s}{N^\alpha}\right\}}\ind_{\left\{\norm{X_k-X_l}\leq \frac{s}{N^\alpha}\right\}}}\\
&=\frac{4}{N^{4-2\alpha}}\Exp{\sum_{\substack{1\leq i,j,k\leq N\\\text{distinct}}}\ind_{\left\{\norm{X_i-X_j}\leq \frac{s}{N^\alpha}\right\}}\ind_{\left\{\norm{X_i-X_k}\leq \frac{s}{N^\alpha}\right\}}}\\
&=\frac{4}{N^{4-2\alpha}}\mathcal{O}_N\left(N^{3-\alpha}\right)=\mathcal{O}_N\left(\frac{1}{N^{1-\alpha}}\right).
\end{align*}
\underline{Case $\card{K}=4$:} When all four indices are distinct, Lemma \ref{lem:complexity_N_of_js_sums} (v) gives us
 \begin{align*}
&\frac{1}{N^{4-2\alpha}}\Exp{\sum_{\substack{1\leq i,j,k,l\leq N\\\text{distinct}}}\ind_{\left\{\norm{X_i-X_j}\leq \frac{s}{N^\alpha}\right\}}\ind_{\left\{\norm{X_k-X_l}\leq \frac{s}{N^\alpha}\right\}}}\\
&=\frac{1}{N^{4-2\alpha}}\left(\Exp{\sum_{i=1}^N\sum_{\substack{j=1\\i\neq j}}^{N}\ind_{\left\{\norm{X_i-X_j}\leq \frac{s}{N^\alpha}\right\}}}^{2}
+\mathcal{O}_N(N^{3-\alpha})\right)\\
&=\Exp{R_\alpha(s,N)}^2+\mathcal{O}_N\left(\frac{1}{N^{1-\alpha}}\right).
\end{align*}
This implies that $\var(R(s,N))=\Exp{R_\alpha(s,N)^2}-\Exp{R_\alpha(s,N)}^2=\mathcal{O}_N\left(\frac{1}{N^{1-\alpha}}\right)$ and by once again using Chebyscheff's inequality we have
\[
\Prob{\vert R(s,N)-\Exp{R(s,N)}\vert\geq \frac{1}{N^{\frac{1-\alpha}{4}}}} = \mathcal{O}_N \left(\frac{1}{N^{\frac{1-\alpha}{2}}} \right).
\]
By the same argument as in the proof of Theorem \ref{thm:rand_walk_ppc}, it follows that
\[
\lim_{N\to\infty} R_\alpha(s,N)=2s~\text{almost surely}.
\]
\end{proof}

\newcommand{\etalchar}[1]{$^{#1}$}

\end{document}